\theoremstyle{plain}
\newtheorem{thm}{Theorem}[section]
\newtheorem{lem}[thm]{Lemma}
\newtheorem{prop}[thm]{Proposition}
\newtheorem{cor}[thm]{Corollary}
\newtheorem{thmintro}{Theorem}
\theoremstyle{definition}
\newtheorem{defn}[thm]{Definition}
\newtheorem{rem}[thm]{Remark}
\newtheorem{ex}[thm]{Example}
\newcommand{\mc}{\mathcal}
\newcommand{\R}{\mathbb{R}}
\newcommand{\N}{\mathbb{N}}
\newcommand{\mb}{\partial_*}
\newcommand{\ov}{\overline}
\DeclarePairedDelimiter\ceil{\lceil}{\rceil}
\DeclarePairedDelimiter\abs{\lvert}{\rvert}%
\newcommand{\cay}{\mathrm{Cay}(G, \mc S)}
\newcommand{\pres}{\ensuremath{\langle\mc S|\mc R\rangle} }
\title{Small cancellation groups with and without sigma-compact Morse boundary}
\author{Stefanie Zbinden}
\begin{document}

\begin{abstract}
    We provide examples of classical $C'(1/6)$--small-cancellation groups which have non-$\sigma$-compact Morse boundary. These are first known examples of groups with non-$\sigma$-compact Morse boundary. Some $C'(1/6)$--small-cancellation groups  do have $\sigma$-compact Morse boundary, so this property distinguishes quasi-isometry types of small-cancellation groups. In fact, we give a complete description of when Morse boundaries of $C'(1/6)$--groups have $\sigma$-compact Morse boundary. We also provide examples of $C'(1/6)$--groups where all Morse rays are strongly contracting.
\end{abstract}

\maketitle

\section{Introduction}

The Morse boundary is a quasi-isometry invariant which was introduced for CAT(0) groups in \cite{CS:contracting} and generalised for all finitely generated groups in \cite{C:Morse}.

The Morse boundary is neither compact nor metrizable for non-hyperbolic groups \cite{CD:stable, M:CAT0}, but for all known examples so far the Morse boundary is $\sigma$-compact, such as in the cases where it has been fully described \cite{charney2019complete,Z:manifold}, and in all groups (quasi-isometric to a space) where all Morse rays are strongly contracting such as CAT(0) groups and coarsely helly groups, including hierarchically hyperbolic groups \cite{S:CAT0, HHP:injective, SZ:injective}.

We show that, in contrast, the Morse boundaries of $C'(1/6)$--small-cancellation groups exhibit a variety of behaviours. As suggested in \cite{charney2019complete}, we show that there indeed exist $C'(1/6)$--small-cancellation groups with non-$\sigma$-compact Morse boundary. Further we show that not all infinitely presented $C'(1/6)$--groups have non-$\sigma$-compact Morse boundary. In fact for some, being Morse is equivalent to being strongly contracting. We emphasise that, in particular, $\sigma$-compactness can be used to distinguish quasi-isometry types of small cancellation groups. While the theory of Morse boundaries is largely developed in analogy with Gromov boundaries, this is a genuinely new phenomenon unique to the Morse boundary.

\begin{thmintro}\label{thm:main}
    For each of the following properties, there exists an infinitely presented, finitely generated $C'(1/6)$--group $G = \pres$ satisfying that property. 
    \begin{enumerate}
        \item The Morse boundary of $G$ is non-$\sigma$-compact.\label{cond:non-sigma}
        \item Every Morse geodesic in $\cay$ is strongly contracting.
        \item The Morse boundary of $G$ is $\sigma$-compact and there exists a Morse ray in $\cay$ which is not strongly contracting.
    \end{enumerate}
\end{thmintro}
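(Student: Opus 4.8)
I would obtain all three groups from explicit classical $C'(1/6)$--presentations $G=\pres$ over the fixed alphabet $\mc S=\{a,b\}$, with $\mc R=\{r_1,r_2,\dots\}$ an infinite set of cyclically reduced words; what changes between the three examples is only the lengths $\rho_n:=\abs{r_n}$ and how the $r_n$ are positioned relative to one another (how long a subword of $r_n$ can occur in a fixed reduced word, and which relators may follow which). Such families exist for essentially any prescribed growth $\rho_n\to\infty$: choosing the $r_n$ inductively, a sufficiently long generic word together with the finitely many previously chosen relators still satisfies $C'(1/6)$, so $\rho_n$ may be taken linear, lacunary, or anything in between as needed. The analytic engine is the combinatorial dictionary for geodesics in $C'(1/6)$--groups: every geodesic in $\cay$ has a well-defined collection of maximal subpaths that are subpaths of relator boundary cycles (``relator excursions''), together with the gaps between them; such a geodesic ray is strongly contracting exactly when the excursion lengths are bounded, it is Morse exactly when the excursion lengths are sublinear in the distance travelled, and the minimal Morse gauge of a Morse ray is governed by the growth of the excursion lengths against the gaps. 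Finally I would use the reduction that $\mb G$ is $\sigma$--compact if and only if there is a sequence of Morse gauges $(N_k)_{k\in\N}$ such that every Morse ray of $\cay$ is $N_k$--Morse for some $k$: one direction holds because each $\mb^{N}G$ is a compact subset of $\mb G$, the other because (a fact standard in the theory, which I would invoke or re-establish) every compact subset of $\mb G$ lies in $\mb^{N}G$ for some Morse gauge $N$.

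\textbf{Parts (2) and (3).} For (2) I would choose $(r_n)$ so that deep excursions cannot be sublinear: arrange the overlaps and admissible continuations so that using a close-to-maximal arc of $r_n$ forces, within distance $O(\rho_n)$, a comparably deep excursion into another relator, and so on. Then any geodesic ray with unbounded excursion lengths is driven to have excursions of size a definite fraction of the distance travelled, hence is not Morse; so every Morse geodesic has bounded excursions, i.e.\ is strongly contracting, and $\mb G=\bigcup_m\mb^{N_m}G$ where $N_m$ is a Morse gauge valid for all geodesics with contraction constant at most $m$, so $\mb G$ is $\sigma$--compact. For (3) I would keep this family but adjoin one carefully tuned subfamily $s_1,s_2,\dots$ which does support a geodesic ray $\gamma$ whose $n$--th excursion follows a close-to-maximal arc of $s_n$, with $\abs{s_n}\to\infty$ but with the gaps growing fast enough that the excursion lengths are sublinear; then $\gamma$ is Morse but, having unbounded excursions, is not strongly contracting. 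Since the only Morse rays with unbounded excursions are those built from $(s_n)$, and these have Morse gauges cofinal in a single increasing sequence $(M_m)$ read off from $(\abs{s_n})$ and the gaps, $\mb G$ is again $\sigma$--compact by the reduction.

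\textbf{Part (1).} This is the core. I would build, for every $\xi\in\{0,1\}^{\N}$, a geodesic ray $\gamma_\xi$ in $\cay$ whose $i$--th relator excursion follows a close-to-maximal arc of a relator of length $\rho_{n_i(\xi)}$, the indices $n_i(\xi)$ being read off from $\xi$, with consecutive excursions separated by gaps long enough (relative to the relator lengths used) that every $\gamma_\xi$ is genuinely Morse, and with distinct $\xi$ producing non-asymptotic rays. By the dictionary the minimal Morse gauge of $\gamma_\xi$ then grows with the sequence $(\rho_{n_i(\xi)})_i$. Given any sequence of Morse gauges $(N_k)_{k\in\N}$, I would diagonalize: pick $\xi$ so that, for every $k$, the $k$--th excursion of $\gamma_\xi$ is deep enough that $\gamma_\xi$ fails to be $N_k$--Morse (possible since that excursion can be made arbitrarily deep while still being rare, hence still leaving $\gamma_\xi$ Morse). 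Hence no $(N_k)$ captures all Morse rays, and by the reduction $\mb G$ is not $\sigma$--compact.

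\textbf{Main obstacle.} The hard part is (1), and within it the lower bounds on Morse gauges: for a ray $\gamma_\xi$ with a deep $k$--th excursion I must exhibit an explicit long $(\lambda,\eps)$--quasigeodesic (with bounded $\lambda,\eps$) with endpoints on $\gamma_\xi$ that escapes the $N_k$--neighbourhood, while simultaneously guaranteeing that $\gamma_\xi$ is nevertheless Morse — so the growth of $(\rho_{n_i(\xi)})_i$ and the sizes of the gaps must be balanced against each other, and against the $C'(1/6)$ condition, with enough slack to run the diagonal argument. Making these small-cancellation estimates precise for an explicit presentation, verifying that the $\gamma_\xi$ are pairwise non-asymptotic, and checking that a deep excursion really forces a large Morse gauge (not merely one that is large in a single parameter), is the crux; the $\sigma$--compactness reduction and the $C'(1/6)$ bookkeeping are comparatively routine, as is the analogous easier balancing needed for (2) and (3).
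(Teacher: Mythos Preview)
Your outline is broadly on the right track, and the high-level structure (diagonalise for (1); force unbounded excursions to be non-sublinear for (2); adjoin a tame subfamily for (3)) matches the paper. But there is a genuine inefficiency, and it shows up exactly where you identify the ``main obstacle''.

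You correctly invoke the combinatorial dictionary: a geodesic ray in a $C'(1/6)$--group is Morse iff its excursion lengths (equivalently, its \emph{intersection function} $\rho(t)=\max\{\abs{w}:w$ a common subword of $\gamma$ and some $r\in\ov{\mc R}$ with $\abs{r}\le t\}$) are sublinear, and strongly contracting iff they are bounded. The paper pushes this one step further: $\mb G$ is $\sigma$--compact iff there is a \emph{contraction exhaustion}, i.e.\ a sequence of sublinear functions $(\rho_i)$ such that every Morse ray has intersection function $\le\rho_i$ for some $i$ (this is an immediate reformulation of your $\sigma$--compactness reduction, using that the Morse gauge and the intersection function determine one another up to bounded error). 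Once you have this, the diagonalisation for (1) is against \emph{sublinear functions}, not against Morse gauges: given any $(\rho_i)$, build a single ray $\gamma$ with sublinear intersection function $\rho$ but $\rho(\abs{r_i})>\rho_i(\abs{r_i})$ for suitable relators $r_i$. No explicit quasigeodesic witnesses, no lower bounds on Morse gauges, no uncountable family $\{\gamma_\xi\}$ are needed. Your ``main obstacle'' therefore dissolves.

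Concretely, the paper gets (1) by introducing the $C'(1/f)$ condition (pieces of a relator $r$ have length $<\abs{r}/f(\abs{r})$ for an unbounded $f$); then prefixes $x_i$ of carefully chosen relators $r_i$ can be concatenated into a ray whose intersection function is both sublinear and beats any prescribed $\rho_i$ at $\abs{r_i}$. For (2) the paper's construction is a concrete ``level'' hierarchy: level-$(k{+}1)$ relators are products of fixed blocks of level-$k$ relators, so a long subword of any relator contains a full block $y^k_{(i,l)}$, which is a $1/N$--fraction of some level-$k$ relator; this forces $\rho(\abs{r^k_i})\ge\abs{r^k_i}/N$ for all $k$ whenever $\rho$ is unbounded, so unbounded implies non-sublinear. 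For (3) the paper does not adjoin a subfamily of new relators $s_n$ in the old alphabet; it adds a \emph{new generator} $a$ and appends $a^k$ to one relator per level, so the ray labelled $a^\infty$ has logarithmic (hence sublinear, unbounded) intersection function. Your plan of adjoining relators $s_n$ in the same alphabet is more delicate: you would have to prevent the $s_n$ from interacting with the level hierarchy enough to reintroduce non-sublinear excursions, whereas a fresh generator makes this disjointness automatic.
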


The proof of the above theorem heavily relies on the work of \cite{arzhantseva2017characterizations} showing that being Morse is equivalent to being contracting and the work of \cite{arzhantseva2019negative} where they develop a tool to determine whether rays are Morse (or strongly contracting) in $C'(1/6)$--groups. 

We further give a characterisation that we call increasing partial small-cancellation condition (IPSC) to show when $C'(1/6)$--groups have non-$\sigma$-compact Morse boundary. Roughly speaking, groups satisfy the IPSC, if there exist subwords of longer and longer relators which are a significant fraction of that relator and have very small intersection with other relators.

\begin{thmintro}\label{thm:main2}
The Morse boundary of a finitely generated $C'(1/6)$--group $G = \pres$ is non-$\sigma$-compact if and only if $G$ satisfies the IPSC.
\end{thmintro}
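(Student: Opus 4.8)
The plan is to prove both directions separately, using the characterization of Morse via contracting from \cite{arzhantseva2017characterizations} and the combinatorial tool of \cite{arzhantseva2019negative} to translate geometric statements about rays in $\cay$ into statements about how subwords of relators interact.

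\textbf{The IPSC implies non-$\sigma$-compactness.} Suppose $G$ satisfies the IPSC. The idea is to build a ``Cantor-set-like'' family of Morse rays that cannot be covered by countably many compact subsets of the Morse boundary. Concretely, the IPSC gives a sequence of relators $r_n$ with distinguished subwords $u_n$ that occupy a definite fraction of $r_n$, are long, and have small overlap with all other relators. I would use these subwords as ``building blocks'': by concatenating appropriately chosen subwords $u_{n_1}, u_{n_2}, \dots$ (interspersed with fixed connecting segments from the Cayley graph) along a geodesic, one produces rays whose Morse-ness is controlled by the small-cancellation estimates of \cite{arzhantseva2019negative} — since each block has small intersection with other relators, the ray is contracting, hence Morse. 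The freedom in choosing the indices $n_i$ (growing fast enough to dominate the defect of whatever relator one is forced to travel along) yields uncountably many such rays, and moreover rays using arbitrarily long blocks arbitrarily late. The key point is that any compact subset of the Morse boundary consists of rays with a \emph{uniform} Morse gauge; I would show that a single Morse gauge forces a uniform bound on how long a ``bad'' block (one with large overlap) can be traversed, so a fixed compact set can contain only rays built from blocks of bounded length past a bounded point — and then a diagonal argument produces a ray outside any countable union of compact sets. Establishing that the concatenated rays are genuine geodesics (or at least uniform quasigeodesics that can be straightened) and that distinct index sequences give distinct boundary points will be the routine-but-delicate part.

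\textbf{Non-$\sigma$-compactness implies the IPSC.} For the converse I would argue the contrapositive: if $G$ does \emph{not} satisfy the IPSC, then the Morse boundary is $\sigma$-compact. Failure of the IPSC means there is a uniform bound — say every subword of a relator that occupies a large fraction of that relator has ``large'' overlap with some other relator, beyond some threshold, once the relator is long enough; equivalently, only finitely many relators contribute ``good'' long blocks. Using the contracting-geodesic criterion of \cite{arzhantseva2019negative}, I would show this forces every Morse geodesic ray to have a Morse gauge bounded in terms of a function of its ``complexity'' (how deep into the relator set it must go), and crucially that the rays realizing a given gauge bound form a compact set. Writing the Morse boundary as the increasing union over $n$ of the set of rays with Morse gauge $\le N_n$ for a suitable sequence $N_n \to \infty$ then exhibits it as $\sigma$-compact. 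The heart of this direction is the implication ``bounded good-block length $\Rightarrow$ uniform Morse gauge on a cofinal family'': I expect to extract from \cite{arzhantseva2019negative} that a geodesic which is $M$-Morse can only fellow-travel relators along subwords whose length is bounded in terms of $M$ and the overlap defect, and then run the finiteness hypothesis to conclude.

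\textbf{Main obstacle.} The technical crux of both directions is the same: making precise and quantitative the link between ``a subword of a relator has small intersection with other relators and is a large fraction of its relator'' and ``a geodesic that travels along that subword is strongly contracting with a controlled constant,'' and conversely ``a geodesic with controlled Morse gauge can only travel along such good subwords, for lengths controlled by the gauge.'' This requires carefully invoking the machinery of \cite{arzhantseva2019negative} — in particular understanding exactly which configurations of relator-pieces a contracting geodesic can pass through — and packaging it so that the compactness/diagonalization arguments go through uniformly. I also expect the construction of the uncountable family in the forward direction to require care in choosing the connecting segments so that the resulting paths are geodesics and so that the index sequence can be recovered from the boundary point, ensuring distinctness.
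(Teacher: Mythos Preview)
Your plan has the right architecture and leans on the right external inputs, but it diverges from the paper in both directions, and in one direction there is a genuine gap.

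\textbf{Forward direction (IPSC $\Rightarrow$ non-$\sigma$-compact).} The paper does not build an uncountable Cantor-type family. It first reformulates $\sigma$-compactness as the existence of a \emph{contraction exhaustion}: an increasing sequence $(\rho_i)$ of sublinear functions such that every Morse ray has intersection function $\le \rho_i$ for some $i$ (Lemma~\ref{lem:contraction_exhaustion}). Assuming such an exhaustion, it then builds a \emph{single} Morse ray whose intersection function dominates every $\rho_i$ at some point, a contradiction. Your diagonal argument over a putative countable compact cover is morally the same, but the uncountability and the boundary-distinctness are red herrings; what matters is producing, for each $i$, a relator $r_i$ with $\rho_i(|r_i|)<|r_i|/k_i$ and inserting a subword of $r_i$ of length $\ge |r_i|/k_i$ into the ray. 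More importantly, the paper isolates the obstacle you flag (``care in choosing the connecting segments'') and solves it by first upgrading IPSC to a \emph{strong IPSC} (Lemma~\ref{lemma:ipsc_to_strong}) in which the distinguished subword has long \emph{interior}, i.e.\ after stripping maximal single-letter powers at both ends. This is precisely what allows one to splice the blocks using a fixed non-piece connector $q'$ (a subword of a fixed relator $r_0$) so that the concatenation is reduced and no relator can extend across a junction. Without this interior control the concatenation need not be a geodesic; this is the specific missing idea in your forward sketch.

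\textbf{Backward direction (non-$\sigma$-compact $\Rightarrow$ IPSC).} Here there is a real gap. Your contrapositive reading (``only finitely many relators contribute good long blocks'') misparses the quantifiers: the negation of IPSC says there exists a sequence $(n_i)$ such that for \emph{every} viable $f$ there is a threshold $K_f$ beyond which no relator works. The threshold depends on $f$, so there is no single finite bad set, and it is unclear how your ``stratify by complexity'' plan produces a countable compact cover. The paper instead argues \emph{directly} and briefly (Lemma~\ref{lemma:non_sigma_to_IPSC}): given $(n_i)$, build sublinear functions $\rho_k$ from it; since no contraction exhaustion exists, some Morse ray $\gamma$ has intersection function $\rho^*$ with $\rho^*\not\le\rho_k$ for all $k$. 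Set $f(t)\approx t/\rho^*(t)$. For each $K$, the inequality $\rho^*(t_0)>\rho_{n_K}(t_0)$ hands you a relator $r$ and a common subword $x$ of $r$ and $\gamma$ with $|x|\ge|r|/i$ for the correct $i\ge K$, and because $x$ sits on the Morse ray $\gamma$, the pair $(x,\mc R)$ automatically satisfies $C'(1/f)$. The point you are missing is that the witnessing subwords $x$ come for free from a single badly-behaved Morse ray, rather than from an analysis of all relators.
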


Theorem \ref{thm:main2} is in some sense an upgraded version of Theorem \ref{thm:main} \eqref{cond:non-sigma}. The key to upgrade the proof is a technique used in the proof of Lemma \ref{lemma:ipsc_to_strong} to construct geodesics in $C'(1/6)$--groups that contain desired subwords.

\subsection*{Outline} In Section \ref{sec:prelim}, we recall background on small-cancellation. In Section \ref{sec:non-sigma-compact} we introduce the IPSC and the $C'(1/f)$--small-cancellation condition for functions $f$. The latter is a small-cancellation condition where pieces of longer relators have to be a smaller portion of the relator than those of shorter relators. We show that all $C'(1/f)$--groups have non-$\sigma$-compact Morse boundary. The main idea behind the proof is that in $C'(1/f)$--groups, we can find sequences of geodesics which have very nice contraction properties for balls up to a certain size, but arbitrarily bad contraction properties for larger balls. Lastly, we show that satisfying the IPSC and having non-$\sigma$-compact Morse boundary is equivalent for $C'(1/6)$--groups. A key part of this proof is a strategy developed in the proof of \ref{lem:strong_to_non-sigma}, which allows us to construct geodesics that contain desired subwords. In Section \ref{sec:strongly_contracting} we construct a class of infinitely presented $C'(1/6)$--groups where being Morse is equivalent to being strongly contracting. We ensure this by making sure that the intersection of different relators is large enough. A more detailed outline of the strategy can be found at the beginning of Section \ref{sec:strongly_contracting}. Lastly in Section \ref{sec:intermidate} we adapt the construction of Section \ref{sec:strongly_contracting} to get a group with $\sigma$-compact Morse boundary and where not all Morse rays are strongly contracting. 

\subsection*{Acknowledgements} I want to thank Matthew Cordes and my supervisor Alessandro Sisto for helpful discussions.

\section{Preliminaries}\label{sec:prelim}

\textbf{Notation and Conventions:} For the rest of the paper, unless specified otherwise, $\mc S$ denotes a finite set of formal variables, $\mc S^{-1}$ its formal inverses and $\ov{\mc S}$ the symmetrised set $ \mc S \cup \mc S^{-1}$. A word $w$ over $\mc S$ (respectively $\ov{\mc S}$) is a finite sequence of elements in $\mc S$ (respectively $\ov{\mc S}$). We denote by $w^-$ and $w^+$ the first and last letter of $w$. By abuse of notation, we sometimes allow words to be infinite.

Let $G = \pres$ be  finitely generated group, $X = \cay$ its Cayley graph, $p$ an edge path in $X$ and $v$ a word over $\ov {\mc S}$.
\begin{itemize}
    \item By following $p$, we can read a word $w$ over $\ov {\mc S}$. We say that $p$ is labelled by $w$. We say a word $w'$ is a subword of $p$ if it is a subword of $w$.
    \item For any vertex $x\in X$ there is a unique edge path labelled by $v$ and starting at $x$.
\end{itemize}

\subsection{Small-cancellation}

We will subsequently define most notions needed in our paper. For further background on small-cancellation we refer to \cite{lyndon1977combinatorial}. 

We say a word $w$ over $\ov {\mc S}$ is \emph{cyclically reduced} if it is reduced and all its cyclic shifts are reduced. Given a set $\mc R$ of cyclically reduced words, we denote by $\overline{\mc R}$ the cyclic closure of $\mc R\cup \mc R^{-1}$. If $\mc R = \{w\}$ we sometimes denote $\ov{\mc{R}}$ by $\ov w$.

\begin{defn}[Piece]
    Let $\mc S$ be a finite set and let $\mc R$ be a set of cyclically reduced words over $\ov{\mc S}$. We say that $p$ is a piece if there exists distinct words $r, r'\in \overline{\mc R}$ such that $p$ is a prefix of both $r$ and $r'$. We say that $p$ is a piece of a word $r\in \ov{\mc R}$ if $p$ is a piece and a subword of $r$.
\end{defn}

\begin{defn}[$C'(\lambda)$ condition]
    Let $\lambda >0$ a constant. We say that a set $\mc R$ of cyclically reduced words satisfies the $C'(\lambda)$--small-cancellation condition if for every word $r\in \overline{\mc R}$ and every piece $p$ of $r$ we have $\abs{p}<\lambda\abs{r}$. 
\end{defn}
If $\mc R$ satisfies the $C'(\lambda)$--small-cancellation condition we call the finitely generated group $G = \pres$ a $C'(\lambda)$--group. If $G = \pres$ is a $C'(\lambda)$--group, then the graph $\Gamma$ defined as the disjoint union of cycle graphs labelled by the elements of $\mc R$ is a $Gr'(\lambda)$-labelled graph as defined in \cite{gruber2018infinitely}. We can thus state and use the results of \cite{gruber2018infinitely} and \cite{arzhantseva2019negative} in the less general setting of groups satisfying the $C'(\lambda)$--small-cancellation condition.  

\begin{lem}[Lemma 2.15 of \cite{gruber2018infinitely}]\label{lem:isometric_embedding} Let $G = \pres$ be a $C'(1/6)$--group. Let $r\in \mc R$ be a relator, $\Gamma_0$ a cycle graph labelled by $r$ and let $f\colon \Gamma_0\to \cay$ be a label-preserving graph homomorphism. Then $f$ is an isometric embedding, and its image is convex.
\end{lem}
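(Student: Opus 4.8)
The plan is the classical small-cancellation argument: encode a potential failure of isometry as a van Kampen diagram and apply Greendlinger's Lemma, using two facts special to the $C'(1/6)$ setting — a geodesic of $X$ admits no strict shortcut, and a subword of $r$ that is also more than half of another relator must be a piece.

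After a cyclic permutation of $r$ we may assume $\Gamma_0$ is the cycle $0,1,\dots,\abs r$ and that $f$ is the path from $1_G$ labelled by $r$. Suppose $f$ is not isometric and pick vertices $x\ne y$ of $\Gamma_0$ with $d_X(f(x),f(y))<d_{\Gamma_0}(x,y)$ and $n:=d_{\Gamma_0}(x,y)$ minimal with this property. Let $\alpha$ be the shorter of the two arcs of $\Gamma_0$ joining $x$ and $y$, so $\alpha$ is a reduced word that is a subword of $r$ with $\abs\alpha=n\le\abs r/2$, and let $\gamma$ be an $X$-geodesic from $f(x)$ to $f(y)$, so $\gamma$ is reduced with $\abs\gamma<n$. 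The word read along $\alpha\gamma^{-1}$ represents $1$ in $G$; freely reducing it (cancellation occurs only at the two junction vertices and cannot delete the whole word since $\abs\alpha>\abs\gamma$) yields a nonempty reduced word $w$ with $\abs w\le\abs\alpha+\abs\gamma<2\abs\alpha\le\abs r$, which therefore bounds a reduced, spur-free van Kampen diagram $D$ over $\pres$ with at least one $2$-cell. Reduction preserves the shape of the boundary: $\partial D$ is still divided into an arc labelled by a subword of $r$ and an arc that is a sub-geodesic of $X$.

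Now case on the number of $2$-cells of $D$. If $D$ has a single $2$-cell $\Pi$, then (being spur-free) $\partial D=\partial\Pi$, so $w$ is, up to inversion and cyclic permutation, a relator $r''$ with $\abs{r''}=\abs w$, and the arc $\alpha$ is a subword of both $r$ and $r''$ with $\abs\alpha=n>\abs w/2=\abs{r''}/2$. If the ambient cyclic conjugates of $r$ and of $r''$ containing $\alpha$ are distinct elements of $\ov{\mc R}$, then $\alpha$ is a piece and $\abs\alpha<\tfrac16\abs{r''}<\tfrac12\abs{r''}<\abs\alpha$, absurd; otherwise those conjugates coincide, forcing $\abs r=\abs{r''}=\abs w<\abs r$, again absurd. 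If $D$ has at least two $2$-cells, Greendlinger's Lemma (in the form for a boundary cut into two arcs) produces a $2$-cell $\Pi$ meeting one of the two boundary arcs of $D$ in a single subpath $s$ with $\abs s>(1-3\cdot\tfrac16)\abs{\partial\Pi}=\tfrac12\abs{\partial\Pi}$, while the complementary arc $\partial\Pi\setminus s$ has length $<\abs s$ and the same endpoints. If $s$ lies along $\gamma$, replacing $s$ by the complementary arc inside $\gamma$ gives a strictly shorter path between $f(x)$ and $f(y)$, contradicting geodesity. If $s$ lies along $\alpha$, then $s$ is a subword of $r$ and of the relator labelling $\partial\Pi$ with $\abs s>\tfrac12\abs{\partial\Pi}$, and the same dichotomy as in the one-cell case applies: either $s$ is a piece ($\abs s<\tfrac16\abs{\partial\Pi}$, absurd) or $\abs{\partial\Pi}=\abs r$, whence $\abs s>\abs r/2\ge\abs\alpha\ge\abs s$, absurd. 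This proves $f$ is an isometric embedding. Convexity follows by the same scheme applied to any $X$-geodesic $\gamma'$ between two points $f(x),f(y)$ of the image together with the shorter arc $\beta$ of $\Gamma_0$ (which by isometry maps to a geodesic of length $\abs\beta=d_X(f(x),f(y))$): if $\gamma'\ne f(\beta)$ as edge paths, the closed word $\gamma' f(\beta)^{-1}$ reduces to a nonempty word of length $\le 2\abs\beta\le\abs r$ and the identical case analysis gives a contradiction — the one new point being that if the resulting diagram has a single $2$-cell whose relator is the ambient cyclic conjugate of $r$, then the portion of that relator read by $\gamma'$ literally coincides with the complementary arc of $f(\Gamma_0)$, so $\gamma'$ lies in the image after all.

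The step I expect to be the real obstacle is guaranteeing that the Greendlinger $2$-cell $\Pi$ can be chosen so that its large outer arc $s$ lies entirely within one of the two boundary arcs $\alpha$, $\gamma$, rather than straddling $f(x)$ or $f(y)$ — one shell per endpoint is a priori consistent with the basic form of Greendlinger's Lemma. I would resolve this by invoking the finer structural (Strebel) classification of reduced $C'(1/6)$ disc diagrams, which in the two-arc setting yields either a degenerate diagram (single cell/edge, ladder) or a shell whose outer arc avoids the two junctions, or — failing that — by an induction that peels off a straddling shell $\Pi$ and reroutes $\partial D$ along $\partial\Pi\setminus s$; this is also the place where the bound $\abs\alpha\le\abs r/2$ is genuinely used. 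Everything else is bookkeeping with the $C'(1/6)$ inequalities.
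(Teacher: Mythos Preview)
The paper contains no proof of this lemma: it is quoted as Lemma~2.15 of \cite{gruber2018infinitely} and used as a black box, so there is nothing in the paper to compare your argument against.

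On correctness, your strategy is the standard one and essentially sound. The obstacle you identify --- that the Greendlinger shell's outer arc may straddle a junction --- is real, and invoking Strebel's classification (the paper's Lemma~\ref{lemma:strebel}) is the right fix; but your description of what Strebel yields is slightly off. It does not hand you a non-straddling shell: it tells you the reduced bigon is either a single face or has shape $I_1$, and in the $I_1$ case you still need an argument beyond ``identical case analysis''. The clean way is this. Any face $\Pi$ whose relator is not in $\ov r$ has its $\alpha$-side arc a piece of length $<\abs{\partial\Pi}/6$, forcing its $\gamma$-side arc to exceed $\abs{\partial\Pi}/2$ and contradicting geodesity of $\gamma$; hence every face has $\abs{\partial\Pi}=\abs r$. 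Now sum perimeters over the $k\ge 2$ faces: $k\abs r$ equals the length of $\partial D$ plus twice the sum of the $k-1$ interior arcs, each a piece of length $<\abs r/6$, while $\abs{\partial D}\le\abs r$ (the $\alpha$-side has length at most $\abs r/2$ and the $\gamma$-side is no longer). This gives $k\abs r<\abs r+(k-1)\abs r/3$, i.e.\ $k<1$, which is absurd. The identical count disposes of the $I_1$ case for convexity, where the two sides have equal length $\le\abs r/2$; your handling of the single-face case there is correct.
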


We call the image of such a label-preserving graph homomorphism an \emph{embedded component}. 

\subsection{Disk diagrams}

\begin{defn}
    A (disk) diagram is a contractible, planar 2-complex. A disk diagram is 
    \begin{itemize}
        \item \emph{simple}, if it is homeomorphic to a disk.
        \item \emph{$\mc S$-labelled} if all edges are labelled by an element of $\ov{S}$.
        \item a diagram \emph{over $\mc R$} if the boundary of any face is labelled by an element of $\ov{\mc R}$
    \end{itemize}
\end{defn}

Let $D$ be a disk diagram and let $\Pi$ be a face of $D$. An \emph{arc} is a maximal subpath of $D$ whose interior vertices all have degree 2. An arc (face) is an \emph{interior arc} (interior face) if it is contained in the interior of $D$ and an \emph{exterior arc} (exterior face) otherwise. Note that an exterior arc is contained in the boundary of $D$. The \emph{interior degree} of a face $\Pi$ is the number of interior arcs in its boundary and the \emph{exterior degree} of a face $\Pi$ is the number of exterior arcs in its boundary.

\begin{defn}[Combinatorial geodesic bigon] A combinatorial
geodesic bigon $(D, \gamma_1, \gamma_2)$ is a simple diagram $D$ whose boundary $\partial D$ is a concatenation of $\gamma_1$ and $\gamma_2$ and such that the following conditions hold.
\begin{enumerate}
    \item Each boundary face whose exterior part is a single arc contained in one of the sides $\gamma_i$ has interior degree at least 4.\label{cond:ext}
    \item The boundary of each interior face consists of at least 7 arcs. \label{cond:int}
\end{enumerate}
\end{defn}

Combinatorial geodesic bigons have been classified in \cite{strebel1990small} as follows.

\begin{figure}
\centering
\begin{minipage}{.5\textwidth}
  \centering
  \includegraphics[width=.9\linewidth]{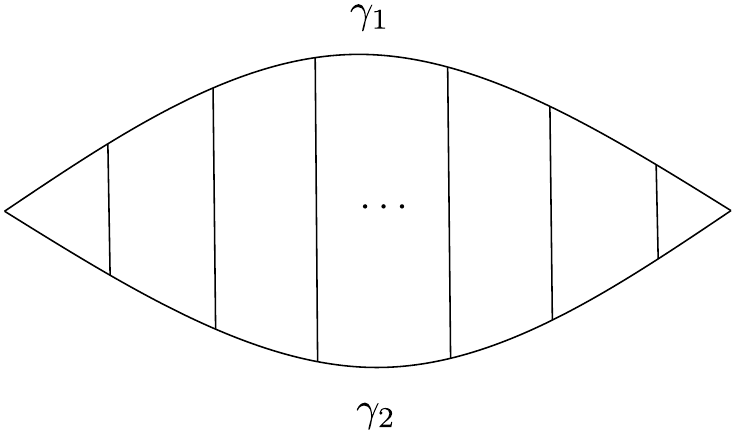}
  \captionof{figure}{Shape $I_1$ of a combinatorial geodesic bigon.}
  \label{fig:i1}
\end{minipage}%
\begin{minipage}{.5\textwidth}
  \centering
  \includegraphics[width=.9\linewidth]{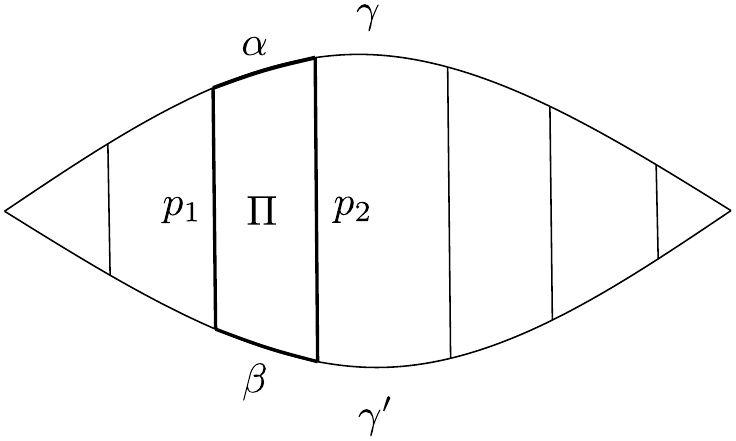}
  \captionof{figure}{Diagram $D$ in the proof of Lemma \ref{lemma:geodesic_condition}.}
  \label{fig:geo_cond}
\end{minipage}
\end{figure}

\begin{lem}[Strebel’s classification,{\cite[Theorem 43]{strebel1990small}}] \label{lemma:strebel} Let $D$ be a combinatorial geodesic bigon. Either $D$ consists of a single face or $D$ has shape $I_1$ as depicted in Figure \ref{fig:i1}, where any but the left and rightmost faces are optional. 
\end{lem}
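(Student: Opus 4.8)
The statement is exactly Strebel's classification of geodesic bigons (the relevant case of his $[p,q]$-diagram theorem), so the route I would take is a combinatorial curvature argument via the combinatorial Gauss--Bonnet formula, followed by a degeneration analysis. As a preliminary step I pass to the \emph{arc structure}: collapse each arc of $D$ to a single edge, obtaining a planar CW-decomposition of the disk in which every interior vertex has degree $\geq 3$, and in which the interior/exterior degree of a face $\Pi$ counts its interior/exterior edges. Writing $n(\Pi) = i(\Pi) + e(\Pi)$ for the number of arc-sides of $\Pi$, condition \eqref{cond:int} says every interior face has $n(\Pi) \geq 7$, and condition \eqref{cond:ext} says every boundary face whose exterior part is a single arc lying inside one of $\gamma_1,\gamma_2$ has $i(\Pi) \geq 4$.

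Next I assign to each corner of an $n$-gon face the angle $\tfrac{n-2}{n}\pi$, so that every face is combinatorially flat and all curvature is concentrated at vertices; Gauss--Bonnet then reads $\sum_{v}\kappa(v) = 2\pi$, with $\kappa(v) = 2\pi - \sum\angle$ at interior vertices and $\kappa(v) = \pi - \sum\angle$ at boundary vertices (the two bigon corners included among the latter). The basic estimates are: an interior vertex all of whose incident faces have $\geq 7$ sides satisfies $\kappa(v) \le 2\pi - \deg(v)\bigl(1 - \tfrac{2}{7}\bigr)\pi < 0$; a degree-$2$ boundary vertex incident to a single face with $n$ sides contributes $\tfrac{2\pi}{n}$; and boundary faces with small interior degree are the only other possible source of positive curvature, now controlled by condition \eqref{cond:ext}. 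Using this bookkeeping I would first show $D$ has no interior faces: an innermost one would force at least seven incident vertices to carry (by the first estimate, once one propagates) a deficit of negative curvature that cannot be compensated, since the positive curvature available near the two bigon corners and near the few boundary faces with $i(\Pi) \le 3$ is capped by \eqref{cond:ext}. A second, analogous count then rules out interior vertices. Consequently the dual graph of $D$ is a path, i.e. consecutive faces meet along exactly one interior arc, and $D$ is a single ladder of boundary faces; matching this against the requirement that $\partial D = \gamma_1 \ast \gamma_2$ with two corners forces precisely shape $I_1$, and the degenerate configurations (a ``corner'' cap face absent, or one face spanning an entire side) are exactly the clause ``any but the left and rightmost faces are optional'' and the alternative ``$D$ consists of a single face''.

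As an alternative, more elementary-looking packaging of the same content, one can argue by induction on the number of faces: the curvature count shows that if $D$ has $\geq 2$ faces then each of its two ends contains a ``cap'' face $\Pi$ with $i(\Pi)=1$ and $e(\Pi)\le 2$; peeling off one such face yields a strictly smaller combinatorial geodesic bigon (one checks the inner boundary of the remaining diagram can be taken as two geodesic sides again), and induction finishes. Either way, the main obstacle is the bookkeeping for boundary faces: condition \eqref{cond:ext} only constrains faces with a \emph{single} exterior arc, so a boundary face can have large exterior degree and hence few interior edges without violating any hypothesis; ruling out such faces (other than those organised into the single ladder) requires combining the curvature count with the convexity/geodesic structure of $\gamma_1$ and $\gamma_2$, which is the heart of Strebel's case analysis in \cite{strebel1990small}.
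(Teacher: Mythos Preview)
The paper does not prove this lemma: it is quoted verbatim as \cite[Theorem 43]{strebel1990small} and used as a black box. There is therefore no proof in the paper to compare your proposal against.

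Your sketch is a reasonable outline of the standard combinatorial Gauss--Bonnet argument that underlies Strebel's classification, and the two strategies you describe (global curvature count versus inductive peeling of cap faces) are both legitimate routes. That said, as written it remains a sketch rather than a proof: the key step ``an innermost interior face would force \ldots\ a deficit of negative curvature that cannot be compensated'' is asserted but not carried out, and you yourself flag that condition \eqref{cond:ext} does not constrain boundary faces with exterior degree $\geq 2$, which is precisely where the careful case analysis lives. If you want to turn this into a self-contained proof you would need to make the curvature bookkeeping explicit (e.g.\ following Strebel or the treatment in McCammond--Wise); for the purposes of the present paper, citing Strebel is both sufficient and what the authors do.
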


We can use Strebel's classification to prove the following result, which is our main tool to show that certain paths we care about are in fact geodesics.

\begin{lem}\label{lemma:geodesic_condition}
Let $G = \langle \mc S| \mc R\rangle$ be a $C'(1/6)$--group and let $X = Cay(G, \mc S)$. Let $\gamma$ be a path in $X$ labelled by a reduced word $w$. If for every common subword $u$ of both $w$ and a relator $r$ we have that 
\begin{align}\label{eq:geodesic_condition}
    \abs{u}\leq \frac{\abs{r}}{3},
\end{align}
then $\gamma$ is a geodesic.

\end{lem}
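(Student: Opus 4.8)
The plan is to argue by contradiction using Strebel's classification (Lemma \ref{lemma:strebel}). Suppose $\gamma$, labelled by the reduced word $w$, is not a geodesic. Then there is a geodesic $\gamma'$ in $X$ with the same endpoints and $\abs{\gamma'}<\abs{\gamma}$. Fill the loop $\gamma \cup \gamma'$ with a disk diagram $D$ over $\mc R$ whose boundary is the concatenation of $\gamma$ and (the reverse of) $\gamma'$; after reducing $D$ (removing cancelling pairs of faces, and consolidating) we may assume $D$ is a simple diagram with no such cancellations. The first step is to check that $(D,\gamma,\gamma')$ is a combinatorial geodesic bigon in the sense defined above: condition \eqref{cond:int} on interior faces is automatic from the $C'(1/6)$ hypothesis (the standard fact that in a reduced diagram over a $C'(1/6)$ presentation every interior face has at least $7$ arcs on its boundary, via Greendlinger-type counting), and condition \eqref{cond:ext} for boundary faces meeting a single arc of one side is where we will eventually use the hypothesis on $w$ — but actually the cleanest route is to note that a boundary face whose exterior part is a single arc on the \emph{geodesic} side $\gamma'$ must have large interior degree because otherwise $\gamma'$ could be shortened (a subword of a relator that is more than half the relator, lying on a geodesic, is a standard contradiction), and a boundary face whose exterior part is a single arc on the $\gamma$ side: here we invoke \eqref{eq:geodesic_condition}.

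More carefully, the main obstacle — and the heart of the argument — is controlling the boundary faces along $\gamma$. Let $\Pi$ be such a face, with its boundary split into an exterior arc $a$ (a subword of $w$, hence a subword of the relator $r = \partial\Pi$ of length $\abs{a}\le \abs{r}/3$ by hypothesis) and the remaining part of $\partial\Pi$ consisting of interior arcs, each of which is a piece and so has length $<\abs{r}/6$. If $\Pi$ had interior degree $\le 3$, then the interior arcs would total less than $3\cdot \abs{r}/6 = \abs{r}/2$, and together with $\abs{a}\le\abs{r}/3$ this gives $\abs{\partial\Pi}<\abs{r}/2+\abs{r}/3<\abs{r}$, a contradiction. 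Hence every such face has interior degree $\ge 4$, so condition \eqref{cond:ext} holds and $(D,\gamma,\gamma')$ is genuinely a combinatorial geodesic bigon. (One must also handle the degenerate cases: if $D$ is empty then $w$ labels a geodesic trivially; if a boundary face has its exterior part split into more than one arc, it is not subject to \eqref{cond:ext} and causes no trouble.)

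Now apply Lemma \ref{lemma:strebel}. If $D$ is a single face $\Pi$, then $w$ is a subword of $\partial\Pi \in \ov{\mc R}$ of length $\abs{w} = \abs{\gamma}>\abs{\gamma'}\ge \tfrac12\abs{\partial\Pi}$ — wait, more directly: $w$ and $\gamma'$ together bound $\partial\Pi$, so $\abs{w}+\abs{\gamma'} = \abs{\partial\Pi} = \abs{r}$, and since $w$ is a common subword of $w$ and $r$ we get $\abs{w}\le \abs{r}/3$; but also $\abs{w}>\abs{\gamma'} = \abs{r}-\abs{w}$ forces $\abs{w}>\abs{r}/2$, a contradiction. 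If instead $D$ has shape $I_1$, then (reading off Figure \ref{fig:i1}) the side $\gamma$ runs along the outer boundaries of a chain of faces, and in particular $\gamma$ contains, as a subword, the exterior arc of the leftmost (or rightmost) face $\Pi_0$, which is a subword of $r_0 = \partial \Pi_0$; the shape $I_1$ forces this arc to be more than half of $\partial\Pi_0$ (the leftmost face in $I_1$ has exactly one interior arc, which is a piece of length $<\abs{r_0}/6$, together with possibly one more short interior arc, so its exterior part exceeds $\abs{r_0}\cdot(1 - 2/6) > \abs{r_0}/3$), contradicting \eqref{eq:geodesic_condition} applied with $u$ this exterior arc and $r = r_0$. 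In every case we reach a contradiction, so $\gamma$ must be a geodesic. The step I expect to require the most care is the precise bookkeeping of which arcs of the extremal faces in shape $I_1$ are pieces versus exterior arcs, i.e. extracting from Strebel's picture the inequality that some subword of $w$ occupies more than a third of an incident relator; once that is pinned down, the contradiction with \eqref{eq:geodesic_condition} is immediate.
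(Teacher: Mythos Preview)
Your setup is right and essentially matches the paper: you verify that $(D,\gamma,\gamma')$ is a combinatorial geodesic bigon by the same inequalities, and your single-face case is correct. The gap is in the $I_1$ case. In shape $I_1$, \emph{every} face---including the extremal ones---has exterior arcs on \emph{both} sides $\gamma$ and $\gamma'$: the leftmost face $\Pi_0$ has boundary consisting of an arc $\alpha\subset\gamma$, an arc $\beta\subset\gamma'$, and a single interior arc $p$ (a piece). So the ``exterior part'' of $\Pi_0$ that you bound from below by $\abs{r_0}(1-2/6)$ is $\alpha\cup\beta$, not a subword of $w$ alone, and you cannot apply \eqref{eq:geodesic_condition} to it; there is no reason the portion on $\gamma$ should exceed $\abs{r_0}/3$.

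The paper's argument works uniformly over all faces rather than singling out an extremal one. For any face $\Pi$ in the $I_1$ diagram, write $\partial\Pi=\alpha\cup p_1\cup\beta\cup p_2$ with $\alpha\subset\gamma$, $\beta\subset\gamma'$, and $p_1,p_2$ (possibly empty) pieces. The hypothesis gives $\abs{\alpha}\le\abs{\partial\Pi}/3$ and small cancellation gives $\abs{p_1}+\abs{p_2}<\abs{\partial\Pi}/3$, so $\abs{\beta}>\abs{\partial\Pi}/3\ge\abs{\alpha}$. Summing over all faces yields $\abs{\gamma'}>\abs{\gamma}$, contradicting that $\gamma'$ is geodesic. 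Your extremal-face idea can in fact be salvaged along these lines: since $\Pi_0$ has only one piece, $\abs{\beta}>\abs{r_0}-\abs{r_0}/3-\abs{r_0}/6=\abs{r_0}/2$, which already contradicts $\gamma'$ being geodesic---but note the contradiction runs through $\beta$ and $\gamma'$, not through $\alpha$ and \eqref{eq:geodesic_condition} as you wrote.

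A smaller point: ``reduce $D$ until it is simple'' is hand-waved; reducing a diagram does not automatically produce a simple one. The paper handles this by taking $\gamma$ to be a \emph{shortest} path satisfying \eqref{eq:geodesic_condition} that fails to be geodesic, so that a cut vertex in $D$ would yield a shorter counterexample.
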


\begin{proof}
Assume that the statement does not hold and let $\gamma$ be the shortest path which satisfies \eqref{eq:geodesic_condition} but is not a geodesic. Let $x$ and $y$ be the start and endpoint of $\gamma$ and let $\gamma'$ be a geodesic, labelled by say $v$, from $y$ to $x$. It is well know (see e.g. \cite[Lemma 2.13]{gruber2015groups}) that there exists an $\ov{\mc S}$--labelled diagram $D$ over $\mc R$ whose boundary is labelled by $wv$ and where every interior arc is a piece. The minimality of $\gamma$ implies that $D$ is simple. The small-cancellation condition ensures that every interior face of $D$ has degree at least 7. Furthermore, let $\Pi$ be a face, labelled by say $r$, whose exterior part is a single arc $\lambda$ contained either in $\gamma$ or $\gamma'$. If $\lambda$ is contained in $\gamma$, $\abs{\lambda}\leq \abs{r}/3$ by \eqref{eq:geodesic_condition}. On the other hand, if $\lambda$ is contained in $\gamma'$, then $\abs{\lambda}\leq \abs{r}/2$ because $\gamma'$ is a geodesic. In either case, the small-cancellation condition ensures that $\Pi$ has interior degree at least 4. Therefore, the diagram $(D, \gamma, \gamma')$ is a combinatorial geodesic bigon. Furthermore, property \eqref{eq:geodesic_condition} and the fact that $\gamma'$ is a geodesic imply that $D$ consists of at least two faces. 

Strebel's classification (Lemma \ref{lemma:strebel}) yields that $D$ has shape $I_1$ as depicted in Figure \ref{fig:i1}. In particular, as depicted in Figure \ref{fig:geo_cond}, the boundary $\partial \Pi$ of any face $\Pi$ of $D$ can be divided into four (possibly) empty paths denoted by $\alpha$, $\beta$, $p_1$ and $p_2$, where $\alpha$ is a subpath of $\gamma$, $\beta$ is a subpath of $\gamma'$ and $p_1, p_2$ are pieces. By \eqref{eq:geodesic_condition} and the small-cancellation condition, $\abs{\alpha}+\abs{p_1}+\abs{p_2}<2\abs{\partial \Pi}/3$ and hence $\abs{\beta} > \abs{\alpha}$. Since this holds for all faces, we have that $\abs{\gamma}< \abs{\gamma'}$, a contradiction to $\gamma'$ being a geodesic. 
\end{proof}

\subsection{Contraction and the Morse boundary} In this section we highlight some consequences of \cite{arzhantseva2017characterizations} and \cite{arzhantseva2019negative}. Forfurhter background on the Morse boundary and contraction we refer to \cite{C:survey} and \cite{arzhantseva2019negative} respectively. 

\begin{defn}[Intersection function]
Let $G = \pres$ be $C'(1/6)$--group. Let $\gamma$ be an edge path in $\cay$. The intersection function of $\gamma$ is the function $\rho : \N\to \R_+$ defined by 
\begin{align*}
    \rho(t) = \max_{\substack{\abs{r}\leq t\\r\in\ov{\mc R}}}\left\{\abs{w}\Big\vert \text{$w$ is a subword of $r$ and $\gamma$}\right\}.
\end{align*}
\end{defn}
\begin{rem}
    In light of Lemma \ref{lem:isometric_embedding}, the intersection function $\rho$ of a geodesic $\gamma$ is equal to the function $\rho'$, defined via
    \begin{align*}
    \rho'(t) = \max_{\abs{\Gamma_0}\leq t}\left\{\abs{\Gamma_0 \cap \gamma}\right\},
    \end{align*}
    where $\Gamma_0$ ranges over all embedded components.
\end{rem}

The following lemma is a combination of \cite[Theorem 1.4]{arzhantseva2017characterizations} and \cite[Corollary 4.14, Theorem 4.1]{arzhantseva2019negative}. It shows the relation between contraction, Morseness and the intersection function.

\begin{lem}[Theorem 4.1, Corollary 4.14 of \cite{arzhantseva2019negative}]\label{lem:4.14} Let $G = \pres$ be a $C'(1/6)$--group. Let $\alpha$ be a geodesic in $X =\cay$ and let $\rho$ be its intersection function. Then,
\begin{enumerate}[label=(\roman*)]
    \item The geodesic $\alpha$ is Morse if and only if $\rho$ is sublinear.
    \item The geodesic $\alpha$ is strongly contracting if and only if $\rho$ is bounded.
    \item If $\rho$ is sublinear, $\alpha$ is $M$-Morse for some Morse gauge $M$ only depending on $\rho$.
    \item If $\alpha$ is $M$-Morse, then $\rho\leq \rho'$ for some sublinear function $\rho'$ only depending on $M$. 
\end{enumerate}
\end{lem}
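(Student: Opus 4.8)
The plan is to assemble the four statements from the two cited references after translating between the classical and graphical small-cancellation settings. Recall from \cite{arzhantseva2019negative} the notion of the \emph{contraction function} of a geodesic $\alpha$, which measures the diameter of the nearest-point projection to $\alpha$ of balls disjoint from $\alpha$, together with the two standard facts about it: $\alpha$ is strongly contracting precisely when this function is bounded, essentially by definition, and, by \cite[Theorem 1.4]{arzhantseva2017characterizations}, $\alpha$ is Morse precisely when this function is sublinear; that reference moreover provides the quantitative two-way dependence between the Morse gauge of $\alpha$ and its contraction function. Thus statements (i)--(iv) will follow once the contraction function is shown to be equivalent, in the appropriate sense --- bounded to bounded, sublinear to sublinear, with explicit mutual control --- to the intersection function $\rho$.

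That last equivalence is the content of \cite[Theorem 4.1, Corollary 4.14]{arzhantseva2019negative}, stated there for $Gr'(1/6)$--labelled graphs. The first step is therefore the observation, already recorded after the definition of the $C'(\lambda)$ condition, that a $C'(1/6)$--group $\pres$ is the group of the $Gr'(1/6)$--labelled graph formed by the disjoint union of the cycle graphs labelled by $\mc R$, so the results of \cite{arzhantseva2019negative} apply directly. Moreover, the Remark following the definition of the intersection function identifies, for a geodesic $\alpha$, the combinatorial intersection function $\rho$ with the function $\rho'$ counting overlaps with embedded components, and by Lemma \ref{lem:isometric_embedding} these embedded components are convex isometrically embedded copies of the relator cycles --- precisely the configurations along which \cite{arzhantseva2019negative} detects failure of contraction. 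Hence $\rho$ agrees with the quantity controlling the contraction function there.

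With these identifications in place the proof is a direct combination. For (ii): $\alpha$ is strongly contracting iff its contraction function is bounded iff $\rho$ is bounded. For (i): $\alpha$ is Morse iff its contraction function is sublinear iff $\rho$ is sublinear, the second equivalence using that the contraction function is squeezed between sublinear functions of $\rho$. For (iii): if $\rho$ is sublinear then the contraction function is bounded by a sublinear function depending only on $\rho$, so by \cite{arzhantseva2017characterizations} the Morse gauge $M$ of $\alpha$ depends only on $\rho$. For (iv): conversely, if $\alpha$ is $M$--Morse then its contraction function is bounded by a sublinear function depending only on $M$, and since $\rho$ is in turn controlled by the contraction function, $\rho \leq \rho'$ for a sublinear $\rho'$ depending only on $M$.

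The main obstacle here is bookkeeping rather than mathematics: the two references phrase their results for quasi-geodesics, use a more refined $(q,\rho)$--contraction notion, and carry their own normalisations, so the work is to check that specialising to geodesics in a $C'(1/6)$--group recovers exactly these four clean statements, and --- most delicately --- that the ``depends only on'' clauses in (iii) and (iv) survive the passage through the contraction function and through the classical-to-graphical translation without picking up a hidden dependence on $\alpha$ or on the presentation.
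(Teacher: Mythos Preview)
Your approach is essentially the same as the paper's: cite Corollary~4.14 of \cite{arzhantseva2019negative} for (i) and (ii), then for (iii) and (iv) pass through the contraction function using Theorem~4.1 of \cite{arzhantseva2019negative} in one direction and Theorem~1.4 of \cite{arzhantseva2017characterizations} in the other. The difference is in how much is actually argued for (iii) and (iv). You correctly flag the ``depends only on'' clauses as the delicate point, but you stop at asserting that the cited theorems supply the needed mutual control between $\rho$ and the contraction function. The paper does not treat this as a black-box citation: it gives short explicit geometric arguments using embedded components. For (iii), it observes that since $\alpha$ is geodesic, $\alpha\cap\Gamma_0$ has length at most $\abs{\Gamma_0}/2$, so the projection of small balls in $\Gamma_0$ onto $\alpha\cap\Gamma_0$ is a point; this yields an explicit local contraction function $\rho'(r)=\rho(8r)$, after which Theorem~4.1 applies. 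For (iv), it argues directly that if $x\in\Gamma_0$ maximises $l=d(x,\Gamma_0\cap\alpha)$, then the projection of $B(x,l)$ onto $\alpha$ has diameter $\abs{\Gamma_0\cap\alpha}$, forcing $\rho\leq\rho'$. These arguments are exactly the ``checking'' you defer in your final paragraph; the paper carries them out, and they are short enough that you could too.
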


\begin{proof}
    Corollary 4.14 of \cite{arzhantseva2019negative} states that (i) and (ii) hold. To get (iii) observe the following; Since $\alpha$ is a geodesic, it intersects every embedded component $\Gamma_0$, with say $\abs{\Gamma_0} = \ell$, in a subsegment of length at most $\ell/2$. Thus for any $x\in \Gamma_0$ with $d(x, \Gamma_0\cap \alpha) = r \leq \ell/8$, the projection of $B_{\Gamma_0}(x, r)$ in $\Gamma_0$ onto $\Gamma_0\cap \alpha$ projects to a single point (namely one of the two endpoints of $\Gamma_0\cap \alpha$). Therefore, $\alpha\cap \Gamma_0$ is $(r ,\rho')$-contracting in $\Gamma_0$ for $\rho'(r) = \rho(8r)$. Theorem 4.1 of \cite{arzhantseva2019negative} then implies that $\alpha$ is $(r, \rho'')$-contracting in $X$ for some $\rho''$ only depending on $\alpha$. By Theorem 1.4 of \cite{arzhantseva2017characterizations}, $\alpha$ is $M$-Morse for some Morse gauge $M$ only depending on $\rho''$ (and hence only depending on $\rho$).

    Lastly, we prove (iv): By Theorem 1.4 of \cite{arzhantseva2017characterizations}, $\alpha$ is $(r, \rho')$-contracting for some sublinear function $\rho'$ only depending on $M$. We may assume the $\rho'$ is increasing. Let $\Gamma_0$ be an embedded component which intersects $\alpha$ and let $x\in \Gamma_0$ be a point maximising $l = d(x, \Gamma_0\cap \alpha)$. The projection of $B_{\Gamma_0}(x, l)$ onto $\Gamma_0\cap \alpha$ (and, by Lemma \ref{lem:isometric_embedding}, the projection of $B_X(x, l)$ onto $\alpha$) has diameter $\abs{\Gamma_0\cap \alpha}$. Hence $\rho'(\abs{\Gamma_0})\geq \rho'(l) \geq \abs{\Gamma_0\cap\alpha}$. Since this is true for all embedded components $\Gamma_0$ we have that $\rho\leq \rho'$.
\end{proof}

\begin{defn}[Contraction Exhaustion]
    Let $G = \pres$ be a $C'(1/6)$--group. A contraction exhaustion of $X =\cay$ is a sequence of sublinear functions $\rho_i : \N \to \R_+$ such that $\rho_i\leq \rho_{i+1}$ for all $i$ and the following property holds. The intersection function $\rho$ of any Morse geodesic ray $\gamma$ in $X$ satisfies $\rho\leq \rho_i$ for some $i$.
\end{defn}

\begin{lem}\label{lem:contraction_exhaustion}
Let $G = \pres$ be a $C'(1/6)$--group. A contraction exhaustion of $\cay$ exists if and only if $\mb G$ is $\sigma$-compact.
\end{lem}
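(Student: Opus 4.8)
The plan is to translate $\sigma$-compactness of the Morse boundary into a statement about intersection functions using Lemma \ref{lem:4.14}, and then build a contraction exhaustion (or extract one) accordingly. Recall that the Morse boundary $\mb G$ is a direct limit of the stratified pieces $\mb^M G$ consisting of equivalence classes of $M$-Morse rays, and that each $\mb^M G$ is compact (metrizable). So $\mb G$ is $\sigma$-compact if and only if it is a countable union of the $\mb^{M_i} G$ for some cofinal sequence of Morse gauges $M_i$ — equivalently, there is a sequence of Morse gauges $M_1 \leq M_2 \leq \cdots$ such that every Morse ray is $M_i$-Morse for some $i$. This is the bridge I will use in both directions.

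For the ``if'' direction, suppose a contraction exhaustion $(\rho_i)$ exists. Given any Morse geodesic ray $\gamma$, its intersection function $\rho$ satisfies $\rho \leq \rho_i$ for some $i$; by Lemma \ref{lem:4.14}(iii), $\gamma$ is then $M_i$-Morse for a Morse gauge $M_i$ depending only on $\rho_i$. Thus $\{M_i\}$ is a countable family of Morse gauges so that every Morse ray is $M_i$-Morse for some $i$, whence $\mb G = \bigcup_i \mb^{M_i} G$ is a countable union of compact sets. For the ``only if'' direction, suppose $\mb G$ is $\sigma$-compact, so $\mb G = \bigcup_i \mb^{M_i} G$ for some increasing sequence of Morse gauges $M_i$ (one can always arrange the gauges to be increasing by replacing $M_i$ with a common upper bound of $M_1, \dots, M_i$, using that $\mb^M G \sq \mb^{M'} G$ when $M \leq M'$). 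By Lemma \ref{lem:4.14}(iv), for each $i$ there is a sublinear function $\rho_i$ depending only on $M_i$ such that the intersection function of any $M_i$-Morse geodesic is $\leq \rho_i$; replacing $\rho_i$ by $\max(\rho_1, \dots, \rho_i)$ makes the sequence increasing while keeping each term sublinear. Now any Morse geodesic ray $\gamma$ represents a point of some $\mb^{M_i} G$, hence (after possibly replacing $\gamma$ by an asymptotic ray from the basepoint, which has the same intersection function up to the relevant asymptotics) is $M_i$-Morse, so its intersection function is $\leq \rho_i$. Hence $(\rho_i)$ is a contraction exhaustion.

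The main subtlety I anticipate is the bookkeeping around basepoints and the precise relationship between ``$\gamma$ is a Morse ray'' and ``$\gamma$ defines a point of $\mb^M G$ for the relevant $M$'': the Morse boundary is defined via rays from a fixed basepoint, whereas Lemma \ref{lem:4.14} is stated for geodesics, and the intersection function is a property of the actual geodesic path. I will need to invoke that every Morse ray is Hausdorff-close to (hence has comparable intersection function and Morse gauge to) a geodesic ray based at the identity representing the same boundary point, and that $M$-Morse as a gauge on the ray matches the stratification of $\mb G$ up to the standard coarsening. The other place requiring care is ensuring that the Morse gauges (resp. sublinear functions) can genuinely be taken to form an increasing cofinal sequence — this is where compactness of each stratum $\mb^M G$ and the direct-limit topology are used, and I will cite the standard fact (e.g. from \cite{C:survey}) that $\mb G$ is $\sigma$-compact precisely when it is exhausted by countably many compact strata.
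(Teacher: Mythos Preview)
Your proposal is correct and follows essentially the same route as the paper: both use the characterisation that $\mb G$ is $\sigma$-compact if and only if there is an increasing sequence of Morse gauges $M_1\leq M_2\leq\cdots$ exhausting all Morse rays, and then invoke Lemma~\ref{lem:4.14}(iii) and (iv) to pass between Morse gauges and intersection functions in each direction. The paper's proof is considerably terser (two sentences), and in particular does not engage with the basepoint and asymptotic-ray subtleties you flag; these concerns are not unreasonable but are routinely absorbed into the standard Morse-boundary machinery, so you need not dwell on them.
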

\begin{proof}
    The Morse boundary $\mb G$ is $\sigma$-compact if and only if there exists a sequence of Morse gauges $M_1\leq M_2\leq \ldots$ such that any Morse geodesic ray in $\cay$ is $M_i$-Morse for some $i$. Lemma \ref{lem:4.14} iii) and iv) conclude the proof. 
\end{proof}

\section{Criterion for non sigma-compact Morse boundaries}\label{sec:non-sigma-compact}
In this section we introduce the increasing partial small-cancellation condition (IPSC). We then prove Theorem \ref{prop:non-sigma-compact-condition}, which states that $C'(1/6)$--groups have non-sigma compact Morse boundary if and only if they satisfy the IPSC. We conclude the section by giving examples of $C'(1/6)$--groups that satisfy IPSC and hence have non-$\sigma$-compact Morse boundary. 

Since the proof of Theorem \ref{prop:non-sigma-compact-condition} is quite technical, we start the section by, in Subsection \ref{section:c1f}, proving Lemma \ref{lemma:caf_case}, which is a weaker result than Theorem \ref{prop:non-sigma-compact-condition}, but illustrates the ideas of the proof without having to go into many of the technical details.

\subsection{$C'(1/f)$--groups do not have $\sigma$-compact Morse boundary}\label{section:c1f}

We introduce $C'(1/f)$--groups and show that such groups do not have $\sigma$-compact Morse boundary. The proof illustrates the main ideas of the proof of Lemma \ref{lem:strong_to_non-sigma}.

\begin{defn}\label{def:viable}
    We say that a weakly increasing function $f: \N \to \R_+$ is viable if $f(n)\geq 6$ for all $n$ and $\lim_{n\to \infty} f(n) = \infty$. 
\end{defn}

Observe that a consequence of a function $f$ being viable is that the function $\rho(n) = n/f(n)$ (or equivalently the function $\rho'(n) =\max\{\rho'(n-1), n/f(n)\}$) is sublinear. 

\begin{defn}\label{def:c1f} Let $f$ be a viable function. We say that a finitely generated group $G = \pres$ is a $C'(1/f)$--group if $\mc R$ is infinite and for every piece $p$ of a relator $r\in \ov{\mc R}$ we have that $\abs{p}<\abs{r} / f(\abs{r})$. 
\end{defn}

In other words, in $C'(1/f)$--groups, pieces of large relators have to be smaller fractions of their relators than pieces of smaller relators. Requiring that viable functions satisfy $f(n)\geq 6$ ensures that all $C'(1/f)$--groups are $C'(1/6)$--groups.
 
\begin{lem}\label{lemma:caf_case}
    Let $f$ be a viable function, let $G = \pres$ be a $C'(1/f)$--group and let $X = \cay$ be its Cayley graph. The Morse boundary $\mb G$ is not $\sigma$-compact. 
\end{lem}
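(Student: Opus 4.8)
The strategy is to use Lemma \ref{lem:contraction_exhaustion}: it suffices to show that no contraction exhaustion $(\rho_i)_{i\in\N}$ of $X$ exists. So suppose for contradiction we are given such a sequence of sublinear functions $\rho_1\leq\rho_2\leq\cdots$. The idea is to build, for each $i$, a Morse geodesic ray $\gamma_i$ whose intersection function exceeds $\rho_i$ somewhere, which will contradict the defining property of a contraction exhaustion once we diagonalise—actually it suffices to produce a single Morse ray whose intersection function is not bounded by \emph{any} $\rho_i$, or, more simply, for each $i$ a Morse ray violating $\rho\le\rho_i$. I would aim for the latter: fix $i$, and construct a Morse geodesic ray $\gamma$ with $\rho_\gamma(t_0) > \rho_i(t_0)$ for some $t_0$; since $i$ was arbitrary this already contradicts the existence of the exhaustion because a genuine exhaustion must capture \emph{every} Morse ray, and we are free to let the ray depend on $i$.

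The construction of $\gamma$ goes as follows. Since $f$ is viable, $f(n)\to\infty$, so the relators of $\mc R$ come in unboundedly many lengths and pieces of a relator $r$ are a vanishingly small fraction $1/f(|r|)$ of $|r|$. First I would pick a relator $r$ of length $\ell$ large enough that $\ell/3 > \rho_i(\ell)$; this is possible because $\rho_i$ is sublinear while $\ell/3$ grows linearly, so for large $\ell$ we have $\ell/3>\rho_i(\ell)$. Now I want a geodesic ray that contains a subword $u$ of $r$ with $|u|$ a fixed fraction of $\ell$, say $|u| = \lfloor \ell/3 \rfloor$ (or any fraction strictly less than $1/2$ that still exceeds $\rho_i(\ell)$). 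Concretely: take the embedded component $\Gamma_0$ corresponding to $r$ (Lemma \ref{lem:isometric_embedding}), which is isometrically embedded and convex, and let $u$ be a subword of $r$ of length $\lfloor\ell/3\rfloor$; the path reading $u$ is a geodesic segment in $\Gamma_0$, hence in $X$. I then need to extend this segment to a bi-infinite or one-sided infinite geodesic ray $\gamma$ that (a) stays a geodesic and (b) has sublinear intersection function, so that by Lemma \ref{lem:4.14}(i) it is Morse. For (a), I would invoke Lemma \ref{lemma:geodesic_condition}: if at every stage I only ever read subwords of relators that are at most a $1/3$-fraction of the relator, the path is automatically a geodesic. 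For the intersection function to be sublinear, the key point is the $C'(1/f)$ hypothesis: any relator $r'$ of length $m$ shares at most a piece-length $m/f(m)$ with \emph{another} relator, and as long as $\gamma$ doesn't fellow-travel a single relator for more than $|r'|/3$ of its length, its overlap with relators of length $\le t$ is controlled by $\max(t/f(t), \text{constant})$, which is sublinear since $f$ is viable. I would make this precise by building $\gamma$ out of the single block $u$ followed by generator letters (or short words) chosen greedily to avoid creating long common subwords with any relator—appealing to the standard fact (used in the proof of Lemma \ref{lemma:geodesic_condition} and elsewhere in small cancellation, cf.\ \cite{arzhantseva2019negative}) that such extensions exist because $\mc S$ has at least two generators and long overlaps with relators are pieces, hence short.

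Putting it together: the ray $\gamma$ so constructed is a geodesic by Lemma \ref{lemma:geodesic_condition}, its intersection function $\rho_\gamma$ is sublinear (by the $C'(1/f)$ estimate plus the fact that the one special block $u$ contributes only a bounded additive term, or one can even arrange the construction so $\rho_\gamma(t)\le \max(\lfloor\ell/3\rfloor,\, t/f(t))$), hence $\gamma$ is Morse by Lemma \ref{lem:4.14}(i). But $\rho_\gamma(\ell)\ge |u| = \lfloor\ell/3\rfloor > \rho_i(\ell)$, so $\rho_\gamma\not\le\rho_i$. Since this works for every $i$, no sequence $(\rho_i)$ can be a contraction exhaustion, so by Lemma \ref{lem:contraction_exhaustion} the Morse boundary $\mb G$ is not $\sigma$-compact.

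**Main obstacle.** The delicate point is step (b): constructing the infinite extension of the block $u$ so that the resulting path is simultaneously a geodesic \emph{and} has provably sublinear intersection function, rather than merely a path that is Morse "in spirit". One has to choose the extension carefully so that it never accumulates a long common subword with any \emph{single} relator (which would make $\rho_\gamma$ too big) while still using letters that don't backtrack; the fact that one special block $u$ of linear-in-$\ell$ length is deliberately inserted means the argument that $\rho_\gamma$ stays sublinear needs the insertion to be a \emph{one-time} event, so that asymptotically the $C'(1/f)$ bound $t/f(t)$ dominates. Making the greedy/inductive construction of this extension fully rigorous — and verifying via Lemma \ref{lemma:geodesic_condition} that it remains geodesic at every finite stage — is where the real work lies; this is presumably also why the author flags it as a warm-up for the harder Lemma \ref{lem:strong_to_non-sigma}, where such geodesics containing prescribed subwords must be built in the general $C'(1/6)$ setting without the convenient $1/f$ slack.
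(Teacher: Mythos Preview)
There is a genuine quantifier error in your reduction. A contraction exhaustion $(\rho_i)$ is defined by the property $\forall\gamma\ \exists i:\ \rho_\gamma\le\rho_i$; to contradict it you must exhibit a single Morse ray with $\forall i:\ \rho_\gamma\not\le\rho_i$. You instead establish $\forall i\ \exists\gamma:\ \rho_\gamma\not\le\rho_i$, which is strictly weaker and does \emph{not} rule out an exhaustion. Concretely, the ray you build from a single block $u$ of length $\lfloor\ell/3\rfloor$ has intersection function bounded by something like $\max(\lfloor\ell/3\rfloor,\,t/f(t))$; nothing prevents this from satisfying $\rho_\gamma\le\rho_j$ for some later $j>i$, so the candidate exhaustion survives. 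Your sentence ``we are free to let the ray depend on $i$'' is exactly where the logic breaks.

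The natural repair---diagonalise by inserting infinitely many such blocks $u_1,u_2,\ldots$ into one ray---also fails with your chosen parameters: if each $u_k$ has length $\approx\ell_k/3$ then $\rho_\gamma(\ell_k)\ge\ell_k/3$ along a sequence $\ell_k\to\infty$, so $\rho_\gamma$ is not sublinear and $\gamma$ is not Morse. The paper's proof resolves both issues at once. It builds a single ray labelled $\prod_{i\ge N}x_i$, where $x_i$ is a prefix of a relator $r_i$ with $|r_i|/(2i)\le|x_i|\le|r_i|/i$ and $|r_i|$ chosen large enough that $\rho_i(|r_i|)<|r_i|/(2i)$. The \emph{shrinking} fraction $1/i$ is the whole point: it keeps $\rho_\gamma$ sublinear (hence $\gamma$ is Morse) while still forcing $\rho_\gamma(|r_i|)\ge|x_i|>\rho_i(|r_i|)$ for every $i$. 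The side conditions $\overline{r_i}\ne\overline{r_{i-1}}$ and $x_{i-1}x_i$ reduced, together with the $C'(1/f)$ bound on pieces, then allow a short case analysis of overlaps that verifies both sublinearity and the hypothesis of Lemma~\ref{lemma:geodesic_condition} directly---no separate ``greedy extension'' is needed, because the building blocks themselves are subwords of relators.
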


\smallskip

To do so, we assume that $\mb G$ is $\sigma$-compact and show that this leads to a contradiction.  Note, we can and will assume that $\mc R = \ov {\mc R}$. By Lemma \ref{lem:contraction_exhaustion}, there exists a contraction exhaustion $(\rho_i)_{i\in\N}$ of $X$. To get to a contradiction we construct a geodesic ray $\gamma$ which is Morse but whose intersection function $\rho$ satisfies $\rho\not \leq \rho_i$ for all $i\in \N$.

\smallskip

\textbf{Construction of the geodesic $\gamma$}: Let $N\geq 18$ be an integer. For every integer $i\geq N$ let $k_i\geq n_i$ be integers such that $f(n_i)>  4i$ and $\rho_i(t)< t/(2i)$ for all $t\geq k_i$ (such an integer $k_i$ exists since $\rho_i$ is sublinear). Let $r_i = x_iy_i\in \mc R$ be a relator such that,
\begin{enumerate}
    \item $\ov{r_i}\neq \ov{r_{i-1}}$,
    \item $\abs{r_i}\geq k_i$,
    \item $\abs{r_i}/(2i)\leq \abs{x_i}\leq \abs{r_i}/i$,
    \item $x_{i-1}x_i$ is reduced. \label{it4}
\end{enumerate}
Observe that such a relator $r_i$ always exists; Properties (1)-(3) can be satisfied by choosing any large enough relator $r$, while Property (4) is satisfied by at least one relator in $\ov{r}$. Observe that $\abs{x_i}\geq \abs{r_i}/2i\geq \abs{r_i} /f(\abs{r_i})$ and hence $x_i$ is not a piece.

\smallskip

Define $\gamma$ as the as ray starting at the identity labelled by the word $\prod_{i=N}^\infty x_i$. It remains to show that $\gamma$ is a geodesic, $\gamma$ is Morse and its intersection function 
\begin{align*}
    \rho(t) = \max_{\substack{\abs{r}\leq t\\r\in\ov{\mc R}}}\left\{\abs{w}\Big\vert \text{$w$ is a subword of $r$ and $\gamma$}\right\}
\end{align*} 
satisfies $\rho\not\leq \rho_i$ for all $i$. By Lemma \ref{lemma:geodesic_condition} the path $\gamma$ is a geodesic if $\rho(t)\leq t/3$ for all $t$. Further, Lemma \ref{lem:4.14} states that $\gamma$ is Morse if $\rho$ is sublinear. We therefore proceed to bound $\rho$.

\smallskip

\textbf{Bounding the intersection function:} Let $w$ be a common subword of $\gamma$ and a relator $r\in \mc R$. That is, $w = u_ix_{i+1}\ldots x_{j-1}u_j$ for some $i < j$ and (possibly empty) subwords $u_i$ of $x_i$ and $u_j$ of $x_j$. By construction, $x_{i+1}$ is not a piece, so either $j = i+1$ or $\ov r = \ov{r_{i+1}}$. 

    \textbf{Case 1:} $\ov r = \ov {r_{i+1}}$ (and $j\geq i+2$).  By construction, $\ov {r_{i+1}}\neq \ov{r_{i+2}}, \ov {r_i}$. Thus $j = i+2$ and both $u_i$ and $u_j$ are pieces. Hence 
    \begin{align*}
        \abs{w} = \abs{u_i} + \abs{u_j} + \abs{x_{i+1}} < \frac{2\abs{r_{i+1}}}{f(\abs{r_{i+1}})} + \frac{\abs{r_{i+1}}}{i+1},
    \end{align*}
    In particular, $\abs{w}<\abs{r_{i+1}}/3 = \abs{r}/3$.

    \textbf{Case 2:} $j = i+1$. If $\ov r \neq \ov{r_i}$ , then $u_i$ is a piece of $r$ and hence $\abs{u_i}<\abs{r}/f(\abs{r}) \leq \abs{r}/6$. If $\ov r = \ov{r_i}$, then $\abs{x_i}\leq \abs{r}/i$. Thus in either case $\abs{u_i} \leq \abs{r}/6$. Doing the same for $u_j$ one can show that $\abs{w}<\abs{r}/3$.

    So indeed $\rho(t)\leq t/3$ for all $t$. Furthermore, summarising the two cases, we get that 
    \begin{align*}
        \abs{w}\leq\begin{cases}
            \frac{2\abs{r_i}}{f(\abs{r_i})} + \frac{2\abs{r_i}}{i} &\text{if $\ov r = \ov r_i$ for some $i\geq N$,}\\
            \frac{2\abs{r}}{f(\abs{r})} &\text{otherwise.}
        \end{cases}
    \end{align*}
    Since $f$ is increasing and unbounded, $\rho$ is indeed sublinear. It remains to show that $\rho\not\leq \rho_i$ for all $i$ or, equivalently, that $\rho\not \leq \rho_i$ for all $i\geq N$. Let $i\geq N$. Recall that $\rho_i(t)< t/(2i)$ for all $t\geq k_i$, so in particular $\rho_i(\abs{r_i})<\abs{r_i} /(2i)$. On the other hand, $\rho(\abs{r_i})\geq \abs{x_i} \geq \abs{r_i}/(2i)$. So indeed $\rho\not \leq \rho_i$.
    
\subsection{The general case}
We first introduce the IPSC and strong IPSC. We then prove Theorem \ref{prop:non-sigma-compact-condition}, the main result of this Section. 

\begin{defn}\label{def:loacl:c1f}
    Let $f$ be a viable function, let $x$ be a reduced word and let $\mc R$ be a set of cyclically reduced words. We say that the pair $(x, \mc R)$ satisfies the $C'(1/f)$--small-cancellation condition if every common subword $p$ of $x$ and a relator $r\in \ov {\mc R}$ satisfies $\abs{p}< \abs{r}/f(\abs{r})$.
\end{defn}

The following definition quantifies having sufficiently long subwords $w$ of longer and longer relators that satisfy the $(w, \mc R)$--small-cancellation condition.

\begin{defn}[IPSC]\label{def:IPSC}
     Let $G = \pres$ be a finitely generated group. We say that $G$ satisfies the \emph{increasing partial small-cancellation condition (IPSC)} if for every sequence $(n_i)_{i\in \N}$ of positive integers, there exists a viable function $f$ such that the following holds; For all $K\geq 0$ there exists $i\geq K$ and a relator $r = xy\in \ov{\mc R}$ satisfying: 
    \begin{enumerate}[label= \roman*)]
        \item $\abs{r}\geq n_i$,
        \item $\abs{x}\geq \abs{r}/i$,
        \item the pair $(x, \mc R)$ satisfies the $C'(1/f)$--small-cancellation condition.
    \end{enumerate}
\end{defn}

\begin{thm}\label{prop:non-sigma-compact-condition}
    Let $G = \pres$ be a $C'(1/6)$--group. The Morse boundary of $G$ is non-$\sigma$-compact if and only if $G$ satisfies the IPSC.
\end{thm}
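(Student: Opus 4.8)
The plan is to prove the two implications separately. For the ``if'' direction (IPSC implies non-$\sigma$-compact), I would essentially upgrade the construction from Lemma~\ref{lemma:caf_case}. Suppose toward a contradiction that $\mb G$ is $\sigma$-compact, so by Lemma~\ref{lem:contraction_exhaustion} there is a contraction exhaustion $(\rho_i)_{i\in\N}$. For each $i$ I would choose $n_i$ large enough that $\rho_i(t) < t/(2i)$ for all $t \ge n_i$ (possible since each $\rho_i$ is sublinear), and moreover large enough to absorb later bookkeeping. Feeding this sequence $(n_i)$ into the IPSC gives a viable function $f$ and, for infinitely many indices $i$, a relator $r_i = x_i y_i$ with $|r_i| \ge n_i$, $|x_i| \ge |r_i|/i$, and $(x_i, \mc R)$ satisfying the $C'(1/f)$ condition. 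I would then build a geodesic ray by concatenating the pieces $x_i$ (passing to a subsequence so the indices are strictly increasing and consecutive relators $\ov{r_i} \ne \ov{r_{i+1}}$ differ, and arranging reducedness of the concatenation by replacing each $x_i$ with a suitable cyclic conjugate word inside $\ov{r_i}$ — note $x_i$ need not be a piece, so $(x_i,\mc R)$ remains valid). The key point is then bounding the intersection function $\rho$ of this ray: for a common subword $w$ of $\gamma$ and a relator $r$, the analysis splits exactly as in Lemma~\ref{lemma:caf_case}, using that each $x_i$ satisfies the local $C'(1/f)$ condition to control the ``overhang'' subwords, and using $|x_i| \ge |r_i|/i$ together with $f$ unbounded to get sublinearity; simultaneously $\rho(|r_i|) \ge |x_i| \ge |r_i|/(2i) > \rho_i(|r_i|)$, contradicting that $(\rho_i)$ is an exhaustion. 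One genuine subtlety over the $C'(1/f)$ case is that here nothing forces $x_i$ to be non-trivial as a fraction of \emph{every} relator it meets — only the one containing it — so I must be careful that a long relator $r$ unrelated to any $r_i$ still has small overlap with $\gamma$; this follows since any such overlap decomposes into pieces-of-$r$ (globally $C'(1/6)$) and at most a couple of the $x_i$'s, each of which overlaps $r$ in a piece of $r$ unless $\ov r = \ov{r_i}$.

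For the ``only if'' direction (non-$\sigma$-compact implies IPSC), I would prove the contrapositive: assume $G$ does \emph{not} satisfy the IPSC and produce a contraction exhaustion, whence $\mb G$ is $\sigma$-compact by Lemma~\ref{lem:contraction_exhaustion}. Negating the IPSC yields a sequence $(n_i)$ such that for \emph{every} viable function $f$ there is a threshold $K = K(f)$ beyond which no relator $r = xy$ with $|r| \ge n_i$, $|x| \ge |r|/i$, $(x,\mc R)$ being $C'(1/f)$ can be found for any $i \ge K$. From this data I would construct an explicit countable family of sublinear gauges $(\rho_i)$ and argue that the intersection function of any Morse geodesic ray is dominated by one of them. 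Here is where the technique flagged in the introduction — ``a strategy developed in the proof of Lemma~\ref{lem:strong_to_non-sigma}'' for \emph{constructing geodesics containing desired subwords} — enters: given a Morse ray $\gamma$ whose intersection function $\rho_\gamma$ is sublinear (Lemma~\ref{lem:4.14}(i)) but, say, not dominated by any $\rho_i$, one extracts from $\gamma$ a sequence of long subwords of long relators that behave like the $x_i$'s in the IPSC, thereby witnessing the IPSC for the sequence $(n_i)$ and contradicting its negation. Concretely, $\rho_\gamma$ sublinear and large at infinitely many scales gives, for each $i$, a relator $r$ with $|r|$ large and $|r \cap \gamma|$ a definite fraction of $|r|$; the subword $x := r \cap \gamma$ of $\gamma$ then automatically satisfies a local small-cancellation estimate governed by $\rho_\gamma$ itself (any common subword of $x$ and another relator $r'$ is a common subword of $\gamma$ and $r'$, hence has length $\le \rho_\gamma(|r'|)$), and since $\rho_\gamma$ is sublinear this estimate packages into a viable $f$ — exactly clause~(iii) of the IPSC.

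The main obstacle, and the technical heart of the argument, is this last extraction step in the ``only if'' direction: one needs a \emph{single} contraction exhaustion $(\rho_i)$ that works for all Morse rays at once, but the negation of the IPSC is a statement quantified over all viable $f$, and matching the combinatorial scale parameters (the index $i$, the relator-length threshold $n_i$, the local gauge $f$) across these two quantifier structures requires a careful diagonal bookkeeping — essentially one must see that ``$\rho_\gamma \not\le \rho_i$ for all $i$'' forces the failure of the IPSG-negation for the \emph{specific} sequence $(n_i)$ coming from that negation, which means the $\rho_i$ must be defined \emph{from} that sequence. I expect the cleanest route is: let $(n_i)$ be the sequence witnessing $\neg$IPSC, define $\rho_i(t) := t/g_i(t)$ for an explicit sublinear $g_i$ built so that any sublinear $\rho_\gamma$ with $\rho_\gamma \not\le \rho_i$ automatically produces, at some scale $\ge n_i$, a fraction-$1/i$ overlap with a relator and a local $C'(1/f)$ bound for a \emph{fixed} viable $f$ (chosen in advance, independent of $\gamma$), then observe this contradicts $\neg$IPSC for that $f$. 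Verifying that a fixed $f$ can be chosen to handle all $\gamma$ simultaneously — rather than an $f$ depending on $\gamma$ — is the delicate part and will likely require the precise form of the geodesic-construction lemma from Section~\ref{sec:non-sigma-compact}.
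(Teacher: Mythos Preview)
Your ``only if'' direction is essentially the paper's Lemma~\ref{lemma:non_sigma_to_IPSC}, and your instinct is right: from a Morse ray $\gamma$ not dominated by any $\rho_k$, the subword $x = r\cap\gamma$ automatically satisfies a $C'(1/f)$ condition with $f$ built from the sublinear $\rho_\gamma$. But your worry about needing a single $f$ independent of $\gamma$ is misplaced. The IPSC asks only that for each sequence $(n_i)$ there \emph{exist} a viable $f$; so given $(n_i)$ you build a candidate exhaustion $(\rho_k)$, take one Morse ray $\gamma$ escaping it, and let $f$ depend on that $\gamma$. No uniformity is needed. Also, the ``geodesic-construction technique'' you invoke here actually belongs to the other implication.

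The real gap is in your ``if'' direction. Under the IPSC you are handed a \emph{specific} subword $x_i$ of $r_i$ for which $(x_i,\mc R)$ is $C'(1/f)$; you cannot ``replace $x_i$ by a cyclic conjugate inside $\ov{r_i}$'' to arrange reducedness, because condition~(iii) is a property of that particular subword, not of $r_i$ as a whole. Worse, unlike the $C'(1/f)$ case, nothing here prevents $x_i$ from being a piece: a huge relator $r$ could contain many consecutive $x_i$'s as a subword (each $|x_i|<|r|/f(|r|)$ is all you know), so your Case-1/Case-2 analysis from Lemma~\ref{lemma:caf_case} breaks down and you cannot bound the intersection function of $\prod x_i$ by $t/3$, let alone sublinearly.

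The paper repairs both issues with a genuine extra idea. First it upgrades IPSC to a \emph{strong} IPSC (Lemma~\ref{lemma:ipsc_to_strong}), guaranteeing that the \emph{interior} of $x_i$---after stripping the maximal single-letter prefix and suffix---still has length $\ge |r_i|/i$; this is exactly what lets you trim the ends of $x_i$ without losing condition~(ii). Second (Lemma~\ref{lem:strong_to_non-sigma}), rather than concatenating the $x_i$ directly, it interleaves them with carefully chosen subwords $q_i$ of one fixed short relator $r_0$: each $q_i$ contains a non-piece $q'$ of $r_0$, and the boundary letters of $q_i$ are tuned (a three-case analysis on $s_4,s_5,s_6$) so that neither $q_iy_i^-$ nor $y_{i-1}^+q_i$ is a subword of any relator. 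This single combinatorial property forces any common subword of $\gamma$ and a relator to meet at most one block $y_i$ plus short pieces, which is what yields both the $t/3$ bound and sublinearity. Your outline does not contain this mechanism, and without it the construction does not produce a geodesic, let alone a Morse one.
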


Theorem \ref{prop:non-sigma-compact-condition} is a direct consequence of Lemma \ref{lemma:non_sigma_to_IPSC},\ref{lemma:ipsc_to_strong} and \ref{lem:strong_to_non-sigma}, which are stated and proven below.  

\begin{lem}\label{lemma:non_sigma_to_IPSC}
    Let $G =\pres$ be a $C'(1/6)$--group. If the Morse boundary of $G$ is not $\sigma$-compact, then $G$ satisfies the IPSC.
\end{lem}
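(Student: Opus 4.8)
The plan is to prove the contrapositive's natural direction directly: assuming $\mb G$ is not $\sigma$-compact, produce, for each sequence $(n_i)$, a viable function $f$ witnessing the IPSC. By Lemma~\ref{lem:contraction_exhaustion}, failure of $\sigma$-compactness means there is \emph{no} contraction exhaustion of $X = \cay$. So for every sequence of sublinear functions $\rho_1 \le \rho_2 \le \cdots$ there is a Morse geodesic ray $\gamma$ whose intersection function $\rho$ is not dominated by any $\rho_i$. The idea is to feed a carefully chosen sequence $(\rho_i)$ built out of the given sequence $(n_i)$ into this fact, extract the resulting Morse ray $\gamma$, and read the desired long subword $x$ of a long relator off of $\gamma$.

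First I would fix the sequence $(n_i)$ and define candidate sublinear functions: roughly, let $\rho_i(t) = t/i$ for $t \ge n_i$ (suitably modified for small $t$ and made weakly increasing), so that $\rho_i$ is sublinear and $\rho \le \rho_i$ would force a strong quantitative bound on how much $\gamma$ can overlap relators of length $\ge n_i$. Since $(\rho_i)$ is not a contraction exhaustion, choose a Morse geodesic ray $\gamma$ with intersection function $\rho$ satisfying $\rho \not\le \rho_i$ for every $i$; equivalently, for each $K$ there is $i \ge K$ and a relator $r$ with $|r| \ge n_i$ admitting a common subword $w$ of $r$ and $\gamma$ with $|w| > \rho_i(|r|) \ge |r|/i$. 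This $w$ will play the role of the subword $x$; extend $w$ to a decomposition $r = xy$ with $x = w$ (or a slight enlargement of it that is still a subword of $\gamma$). Property (i) and (ii) of the IPSC are then immediate. For property (iii) — that $(x,\mc R)$ satisfies $C'(1/f)$ for some viable $f$ — I would use that $\gamma$ is Morse, hence by Lemma~\ref{lem:4.14}(iv) its intersection function $\rho$ is dominated by some sublinear $\rho'$ depending only on $\gamma$'s Morse gauge $M$. Then any common subword $p$ of $x$ and a relator $r'$ is in particular a common subword of $\gamma$ and $r'$, so $|p| \le \rho(|r'|) \le \rho'(|r'|)$, and setting $f(n) = n/\rho'(n)$ (rounded down, and capped below by $6$, and made weakly increasing — here one uses sublinearity of $\rho'$ to get $f(n) \to \infty$) gives exactly $|p| < |r'|/f(|r'|)$, i.e. the $C'(1/f)$ condition for the pair $(x,\mc R)$.

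The one subtlety is that the IPSC requires a \emph{single} viable $f$ that works for \emph{all} $K$ simultaneously, whereas the procedure above produces one ray $\gamma$ — but that is fine, because a single Morse ray has a single Morse gauge $M$, hence a single $\rho'$ and a single $f$; for each $K$ we only vary the choice of $i$ and $r$, which is allowed. A second point to be careful about: after choosing $w$ as a common subword of $\gamma$ and $r$, one must ensure it can be taken to be a genuine subword of a relator $r = xy \in \ov{\mc R}$ (cyclic closure), which is automatic since $w$ is a subword of some $r \in \ov{\mc R}$ and we may cyclically rotate $r$ so that $w$ is a prefix, writing $r = xy$ with $x$ containing $w$. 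The main obstacle I anticipate is purely bookkeeping: verifying that the piecewise-defined $\rho_i$ and $f$ are genuinely weakly increasing, sublinear, and viable (in particular $f(n) \ge 6$, which may force replacing $f$ by $\max(f, 6)$ and checking this is still unbounded), and making sure the inequality $|w| > |r|/i$ coming from $\rho \not\le \rho_i$ survives the transition to the cyclic-closure normalization $r = xy$. None of this is deep, but it is where the proof must be written with care; I would isolate the construction of $(\rho_i)$ and of $f$ as the two explicit definitions and then let the logic above run.
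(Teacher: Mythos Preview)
Your overall strategy is the same as the paper's: build a candidate ``exhaustion'' from the sequence $(n_i)$, extract a Morse ray $\gamma$ whose intersection function $\rho^*$ beats every term, and define $f$ from the sublinearity of $\rho^*$. The construction of $f$ in your step~(iii) is essentially what the paper does (the paper writes $f'(t)=\min_{t'\ge t}\{t'/\rho^*(t')\}-1$ and $f=\max\{6,f'\}$, which is your ``made weakly increasing, capped below by $6$'' with the $-1$ in place of rounding to get the strict inequality).

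There is, however, a real gap in your choice of $\rho_i$. From $\rho\not\le\rho_K$ with $\rho_K(t)=t/K$ for $t\ge n_K$ you obtain a $t_0\ge n_K$, a relator $r$ with $|r|\le t_0$, and a common subword $w$ with $|w|>t_0/K$. This does give $|w|>|r|/K$, but it does \emph{not} force $|r|\ge n_K$: nothing prevents $|r|$ from being tiny compared to $t_0$, and then there is no $i\ge K$ with $|r|\ge n_i$. The paper repairs this by flooring the candidate functions at a constant: it defines a single sublinear $\rho$ with $\rho(t)\ge t/j$ whenever $n_j\le t<n_{j+1}$, and then sets $\rho_k(t)=\max\{\rho(t),k\}$. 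Applying $\rho^*\not\le\rho_{n_K}$ then yields $|w|>\rho_{n_K}(t_0)\ge n_K$, hence $|r|\ge|w|>n_K$; taking $j$ with $n_j\le |r|<n_{j+1}$ gives $j\ge K$, and $|w|>\rho_{n_K}(t_0)\ge\rho(|r|)\ge|r|/j$ supplies condition~(ii). This flooring trick is the one genuine idea your sketch is missing.

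One further point you should not leave to bookkeeping: after capping $f$ below by $6$, condition~(iii) is \emph{not} automatic for short relators $r'$ with $f(|r'|)=6$, since $\rho^*(|r'|)$ may well exceed $|r'|/6$ there. The paper handles this separately: because the relator $r$ produced above is long while $|r'|$ is short, any common subword $p$ of $x\subset r$ and $r'$ is a genuine piece, and the ambient $C'(1/6)$ condition gives $|p|<|r'|/6=|r'|/f(|r'|)$.
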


\begin{proof}
    Assume that $\mb G$ is not $\sigma$ compact. Let $(n_i)_{i\in \N}$ be a sequence of positive integers. We may assume that $n_i < n_{i+1}$ for all $i$. 
    Define the function $\rho : \N \to \R_+$ iteratively via $\rho(1) = 1$ and
    \begin{align*}
        \rho(t) = \begin{cases}
            t &\text{ if $t< n_1$,}\\
           \max\left\{ t / i, \rho(t-1)\right\} &  \text{if $n_i\leq t < n_{i+1}$}.
        \end{cases}
    \end{align*}
    
    Furthermore for all $k\in \N$ define the function $\rho_k : \N \to \R_+$ via $\rho_k(t) = \max\{\rho(t), k\}$. Observe that $\rho$ and all $\rho_k$ are sublinear. 
    
    Since $\mb G$ is not $\sigma$-compact, there exists a Morse geodesic ray $\gamma$ whose intersection function $\rho^*$ is sublinear but $\rho^*\not\leq \rho_k$ for all $k$. Define the functions $f, f': \N \to \R_+$ via
    \begin{align*}
        f'(t) =  \min_{t'\geq t}\left\{\frac{t'}{\rho^*(t')}\right\} -1,\quad
         f(t) = \max \left\{ 6, f'(t)\right\}.
    \end{align*}
    Observe that $f$ is viable. Let $K_0$ be the smallest integer such that $f'(K_0)\geq 6$. Let $K\geq K_0$. Since $\rho^*\not\leq\rho_{n_K}$, there exists $t_0\geq 1$ such that $\rho^*(t_0) > \rho_{n_K}(t_0)$. In particular, there exists a relator $r\in \ov{ \mc R}$ of length $m\leq t_0$ and a common subword $x$ of $r$ and $\gamma$ such that $ \abs{x}> \rho_{n_K}(t_0)$. We may assume that $x$ is a prefix of $r$. Let $j$ be such that $n_j\leq m < n_{j+1}$. It remains to show that $j\geq K$ and (ii), (iii) from the definition of IPSC are satisfied for $r = xy$. 
    
    Since $\rho_{n_K}(t_0)\geq n_K$, we have indeed that $j\geq K$. Further, $\rho_{n_K}(m) \geq m/j$ and hence $\abs{x}\geq \abs{r}/j$.      
    Lastly,we prove (iii). Let $r'$ be a relator with $\abs{r'}\geq K_1$ and let $w$ be a common subword of $r'$ and $x$. Since $x$ is a subword of $\gamma$, we have that $\abs{w}\leq \rho^*(\abs{r'}) < \frac{\abs{r'}}{f(\abs{r'})}$. On the other hand, let $r'$ be a relator with $\abs{r'}<K_1$ and let $w$ be a common subword of $r'$ and $x$. Since $\abs{r}\geq K_1$, $w$ is a piece and hence $\abs{w}< \abs{r'}/6 = \abs{r'}/f(\abs{r'})$. So indeed, the pair $(x, \mc R)$ satisfies the $C'(1/f)$--small-cancellation condition. Hence, $G$ satisfies the IPSC, which concludes the proof.
\end{proof}

Before we can state Lemma \ref{lemma:ipsc_to_strong} and \ref{lem:strong_to_non-sigma}, we need to introduce the strong IPSC. 

\begin{defn} Let $w $ be a reduced word over $\ov{\mc S}$. We can write $w = s_1^{k_1}\ldots s_n^{k_n}$, where $s_i\in\ov{\mc{S}}$, $k_i>0$ and $s_{i+1}\neq \{s_i, s_i^{-1}\}$. We say that the word $s_2^{k_2}\ldots s_{n-1}^{k_{n-1}}$ is the \emph{interior} of $w$. In particular, if $n\leq 2$, the interior of $w$ is empty.
\end{defn}

The strong IPSC is similar to the IPSC, but condition ii) is replaced with the stronger version ii*), which allows more control on the interior of $x$. This control will be crucial in the proof of Lemma \ref{lem:strong_to_non-sigma}.

\begin{defn}[Strong IPSC]
     Let $G = \pres$ be a finitely generated group. We say that $G$ satisfies the \emph{strong IPSC} if for every sequence $(n_i)_i$ of integers, there exists a viable function $f$ such that the following holds; For all $K\geq 0$ there exists $i\geq K$ and a relator $r = x y\in \ov{\mc R}$ satisfying: 
    \begin{enumerate}[label= \roman*)]
        \item $\abs{r}\geq n_i$,
        \item [ii*)] $\abs{x'}\geq \abs{r}/i$, where $x'$ denotes the interior of $x$,
        \item [iii)] the pair $(x, \mc R)$ satisfies the $C'(1/f)$--small-cancellation condition.
    \end{enumerate}
\end{defn}

\begin{lem}\label{lemma:ipsc_to_strong}
    Let $G = \pres$ be a $C'(1/6)$--group satisfying the IPSC. Then $G$ satisfies the strong IPSC.
\end{lem}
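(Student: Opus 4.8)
The plan is to manufacture a strong-IPSC certificate out of an IPSC certificate by \emph{padding}: given a relator $r=xy$ witnessing the IPSC, pass to a cyclic conjugate of $r$ and take as the new prefix the word $x$ with one extra letter glued on each side, drawn from $y$, so that $x$ becomes literally the interior of the new prefix.

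In detail, let $(m_i)_i$ be the sequence for which the strong IPSC must be verified. Apply the IPSC to the sequence $(m_i)_i$ (or, if the degenerate case below forces re-indexing, to a suitably faster sequence $n_i\ge m_i$), obtaining a viable function $f$; we will output a viable function $\tilde f$ slightly smaller than $f$, chosen so that adjoining a bounded number of letters to a word whose common subwords with a relator $r'$ have length $<\abs{r'}/f(\abs{r'})$ yields a word whose common subwords with $r'$ have length $<\abs{r'}/\tilde f(\abs{r'})$. Fix $K\ge 0$ and let the IPSC produce $j\ge K$ and $r=xy\in\ov{\mc R}$ with $\abs r\ge m_j$, $\abs x\ge\abs r/j$, and $(x,\mc R)$ satisfying $C'(1/f)$. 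Note $y\ne\varnothing$: otherwise $x$ would be a common subword of $x$ and $r$ of full length, contradicting $(x,\mc R)$ satisfying $C'(1/f)$. Write $x=s_1^{k_1}\cdots s_n^{k_n}$ in syllable form and $y=\ell_+\,w\,\ell_-$ with $\ell_+=y^-$, $\ell_-=y^+$. Set $\tilde r:=\ell_-\,x\,\ell_+\,w$ --- a cyclic conjugate of $r$, hence in $\ov{\mc R}$, with $\abs{\tilde r}=\abs r\ge m_j$ --- and let $\tilde x:=\ell_-\,x\,\ell_+$, a prefix of $\tilde r$.

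Suppose first the \emph{generic case} $\ell_-\ne s_1$ and $\ell_+\ne s_n$. Then $\ell_-$ and $\ell_+$ form their own syllables of $\tilde x$, so the interior of $\tilde x$ is exactly $x$ and $\abs{\operatorname{int}(\tilde x)}=\abs x\ge\abs r/j=\abs{\tilde r}/j$. Moreover, any common subword of $\tilde x=\ell_-x\ell_+$ and a relator $r'$ is obtained from a common subword of $x$ and $r'$ by adjoining at most the two padding letters, hence has length $<\abs{r'}/f(\abs{r'})+2\le\abs{r'}/\tilde f(\abs{r'})$; so $(\tilde x,\mc R)$ satisfies $C'(1/\tilde f)$. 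Thus $(i,\tilde r,\tilde x)$ with $i:=j\ge K$ is a strong-IPSC certificate. Since $K$ was arbitrary, this proves the strong IPSC with the function $\tilde f$ --- \emph{provided} we never land in the degenerate case.

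The \emph{degenerate case} is $\ell_-=s_1$ or $\ell_+=s_n$, i.e. the first or last syllable of $x$ continues into $y$, so a padding letter would merge into it. Here one re-centres instead: pass to the cyclic conjugate of $r$ beginning at the last letter of the syllable $s_1^{k_1}$ (and/or ending just before the first letter of $s_n^{k_n}$), so that the offending syllable becomes a length-$1$ syllable of the new prefix, and work with the shorter good subword of $x$ obtained by deleting the offending syllable(s); the length lost is at most $k_1+k_n$, and each $k_\ell<\abs r/6$ since the run $s_\ell^{k_\ell}$ inside $r$ gives rise to the piece $s_\ell^{k_\ell-1}$. I expect this degenerate case to be the main obstacle. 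Unlike the generic case --- where the new interior equals $x$ exactly and the strong-IPSC index can simply be taken equal to $j$ --- here one must balance the strong-IPSC index against the length lost to the boundary syllables, which is delicate precisely when $\abs x$ is itself only of order $\abs r/j$, so that the loss can overwhelm $\abs x$. In that regime $x$ is essentially a single syllable, and one must exploit the $C'(1/6)$-bound on single-generator runs, together with a careful choice of which cyclic conjugate and which prefix to use and which index to declare (and, if necessary, of a faster input sequence $n_i\ge m_i$ so that the re-indexed certificate still satisfies $\abs{\tilde r}\ge m_i$), to still extract a usable certificate.
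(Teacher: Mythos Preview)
Your padding idea is elegant in the generic case, and that part is correct: when the letters flanking $x$ differ from $x^-$ and $x^+$, adjoining them makes $x$ literally the interior, and a two-letter slack in $f$ is harmless. But the degenerate case is not an afterthought---it is the entire content of the lemma, and your proposed fix does not close it.

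Your bound $k_1,k_n<\abs r/6$ from the shift-piece argument loses up to $\abs r/3$, which swamps $\abs x\ge\abs r/j$ as soon as $j\ge 3$; so no certificate survives. Sharpening to $k_1<\abs r/f(\abs r)$ via the $C'(1/f)$ condition on $(x,\mc R)$ applied to $r$ itself does not help either: the very same application to the whole of $x$ gives $\abs x<\abs r/f(\abs r)$, and combining this with $\abs r/j\le\abs x$ forces $f(\abs r)<j$. So the boundary syllable could occupy essentially all of $x$, and you cannot break this circularity by choosing the input sequence, since $f$ is handed to you only \emph{after} the sequence is fixed.

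The paper's argument is genuinely different and sidesteps padding altogether. It classifies each generator $s\in\ov{\mc S}$ by whether $\rho_s(t)=\max\{k:s^k\text{ is a subword of some }r\in\ov{\mc R},\ \abs r\le t\}$ is sublinear. For $s$ with sublinear $\rho_s$ one chooses the input sequence $n_i'$ (fed into the IPSC) fast enough that $\rho_s(t)\le t/(3i)$ for $t\ge n_i'$; this bounds the boundary syllable when its letter lies in this class. For $s$ with non-sublinear $\rho_s$, unboundedness of $f$ produces a fixed relator witnessing $\rho_s(D_s)>D_s/f(D_s)$; since $(s^{k_1},\mc R)$ inherits the $C'(1/f)$ condition from $(x,\mc R)$, this forces $k_1<D_s$, an \emph{absolute} constant independent of $i$ and $r$. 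Taking $D=\max_s D_s$ and $K\ge D$, together with $n_i'\ge 3i^2$, gives $k_1\le\abs r/(3i)$ in this case too. Hence the interior already satisfies $\abs{x'}\ge\abs r/(3i)$, and the index $3i$ (with $n_i'\ge n_{3i}$) finishes the strong-IPSC certificate---no padding, no case split. This dichotomy on generators is the missing ingredient in your approach.
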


\begin{proof}
For every generator $s\in \ov{\mc S}$, define the function $\rho_s : \N\to \R_+$ via
\begin{align*}
    \rho_s(t) = \max_{\substack{r\in \ov{\mc R} \\ \abs{r}\leq t}}\{k |\text{$s^k$ is a subword of $r$}\}.
\end{align*}
Observe that the function $\rho_s$ need not be sublinear. Let $\mc S_1$ be the set of generators $s\in\ov{\mc S}$ for which $\rho_s$ is sublinear, and let $\mc S_2 $ be the set of generators $s\in\ov{\mc S}$ for which $\rho_s$ is not sublinear. Since $\mc S$ is finite, the function $\rho : \N\to R_+$ defined via $\rho(t)= \max_{s\in \mc S_1}\{\rho_s(t)\}$ is sublinear.

Let $(n_i)_{i\in \N}$ be a sequence of positive integers. For all $i\geq 1$ choose an integer $n'_i$ such that $n_i '\geq n_{3i}$, $n_i'\geq 3i^2$ and such that $\rho(t)\leq \frac{t}{3i}$ for all $t\geq n_i'$. Since $\rho$ is sublinear we can always find such an integer $n_i'$. 

Next we use that $G$ satisfies the IPSC. Let $f$ be the viable function corresponding to the sequence $(n_i')_i$. Note that $f$ is unbounded. Let $s\in \mc S_2$. Since $\rho_s$ is not sublinear, there exists a constant $D_s$ such that $\rho_s(D_s)> D_s/f(D_s)$. In particular, if $(s^k, \mc R)$ satisfies the $C'(1/f)$--small-cancellation condition, for some $s\in \mc S_2$, then $k < D_s$. Let $D = \max_{s\in \mc S_2} \{D_s\}$. 

Let $K\geq D$. Since $\pres$ satisfies the IPSC, there exists an integer $i\geq K$ and a relator $r\in \ov{\mc R}$ with $r= xy$ and such that 
\begin{enumerate}
    \item $\abs{r}\geq n_i'\geq n_{3i}$
    \item $\abs{x}\geq \abs{r}/i$
    \item the pair $(x, \mc R)$ satisfies the $C'(1/f)$--small-cancellation condition. 
\end{enumerate}
Observe the following; to prove that $G$ satisfies the strong IPSC, it now suffices to show that $\abs{x'}\geq \abs{r}/(3i)$, where $x'$ is the interior of $x$. Write $x = s_s^kx's_e^l$ for some $s_s, s_e\in \ov{\mc S}$. If $s_e\in \mc S_1$, then $k\leq\rho(\abs{r})$ by the definition of $\rho$. By the definition of $n_i'$, we have additionally $\rho(\abs{r})\leq\abs{r}/(3i)$. If $s_e\in \mc S_2$, we have $k\leq D$ as explained above. Since $i\geq D$ and $n_i'\geq 3i^2$ we have in both cases $k\leq \abs{r}/(3i)$. The same holds for $l$. Thus $\abs{x'}\geq \abs{r}/(3i)$, implying that $\pres$ indeed satisfies the strong IPSC.
\end{proof} 

\begin{lem}\label{lem:strong_to_non-sigma}
    Let $G =\pres$ be a $C'(1/6)$--group which satisfies the strong IPSC. Then the Morse boundary of $G$ is not $\sigma$-compact.
\end{lem}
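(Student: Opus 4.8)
The plan is to mimic the proof of Lemma \ref{lemma:caf_case}, but to work around the fact that now the relators $r_i = x_i y_i$ furnished by the strong IPSC need not be such that $x_{i-1} x_i$ is reduced; the whole point of the ``interior'' version ii*) of the strong IPSC is that I can glue the $x_i$'s together with short connecting words and still keep everything geodesic. So I would argue by contradiction: assume $\mb G$ is $\sigma$-compact, invoke Lemma \ref{lem:contraction_exhaustion} to obtain a contraction exhaustion $(\rho_k)_{k\in\N}$, and build a Morse geodesic ray $\gamma$ whose intersection function $\rho$ satisfies $\rho\not\leq\rho_k$ for every $k$, contradicting the defining property of the exhaustion.

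First I would set up the numerical parameters. Given the exhaustion, pick for each $i$ an integer $n_i$ large enough that $\rho_i(t) < t/(2i)$ for all $t\geq n_i$ (possible since $\rho_i$ is sublinear); feed the sequence $(n_i)_i$ into the strong IPSC to get a viable function $f$, and then, passing to a subsequence, extract for infinitely many indices $i$ (say $i$ ranging over an infinite set $I$) a relator $r_i = x_i y_i\in\ov{\mc R}$ with $\abs{r_i}\geq n_i$, interior $x_i'$ of $x_i$ of length $\geq \abs{r_i}/i$, and $(x_i,\mc R)$ satisfying $C'(1/f)$. Since $f$ is unbounded I can also arrange (by choosing $i$ larger along $I$ if necessary, and using $f(\abs{r_i})\geq f(n_i)$) that $f(\abs{r_i}) > 4i$, as in the proof of Lemma \ref{lemma:caf_case}. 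After reordering $I$ as an increasing sequence I rename so that the chosen relators are indexed by consecutive integers $i\geq N$ for some large $N$.

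Next comes the key construction, which is the ``technique'' advertised in the introduction: I build $\gamma$ by concatenating the words $x_i$ but inserting between $x_i$ and $x_{i+1}$ a single generator (or short word) $c_i$ chosen so that the concatenation $\cdots x_i\, c_i\, x_{i+1}\, c_{i+1}\cdots$ is reduced. Concretely, because only the \emph{first and last} syllables of $x_i$ are unconstrained, it suffices to pick $c_i\in\ov{\mc S}$ distinct from (the inverse of) the last letter of $x_i$ and from the first letter of $x_{i+1}$, which is possible as long as $\abs{\ov{\mc S}}\geq 3$; if $\mc S$ is too small one instead uses a connecting word of bounded length, or one observes that in the relevant case one can conjugate/relabel — I would handle this with a short remark. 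Let $\gamma$ be the ray from the identity labelled by $\prod_{i\geq N}(x_i c_i)$ (dropping $c_i$ when empty). I then bound the intersection function $\rho$ of $\gamma$: a common subword $w$ of $\gamma$ and a relator $r$ is of the form $u_i c_i x_{i+1} c_{i+1}\cdots x_{j-1} c_{j-1} u_j$; since the interior $x_{i+1}'$ of $x_{i+1}$ is not a piece (its length $\geq \abs{r_{i+1}}/i \geq \abs{r_{i+1}}/f(\abs{r_{i+1}})$ forces it to be contained in a single relator, necessarily $\ov{r_{i+1}}$), the telescoping collapses just as in Lemma \ref{lemma:caf_case}: either $j=i+1$, or $\ov r = \ov{r_{i+1}}$ and $j=i+2$. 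In both cases the contributions of the $u$'s and $c$'s are pieces (hence $<\abs{r}/f(\abs{r})$) or bounded by $\abs{r}/i$, plus the $O(1)$ from the connecting letters, so one gets $\abs{w}\leq \abs{r}/f(\abs{r}) + \abs{r}/i + C$ type bounds; this yields $\rho(t)\leq t/3$ for all $t$ (so $\gamma$ is a geodesic by Lemma \ref{lemma:geodesic_condition}), $\rho$ sublinear (so $\gamma$ is Morse by Lemma \ref{lem:4.14}), and $\rho(\abs{r_i}) \geq \abs{x_i'} \geq \abs{r_i}/i \geq \abs{r_i}/(2i) > \rho_i(\abs{r_i})$, giving $\rho\not\leq\rho_i$ for all $i\geq N$ and hence the contradiction.

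The main obstacle I expect is exactly the bookkeeping around the connecting letters $c_i$: I must make sure (a) that they can always be chosen of bounded length so that the whole infinite product is reduced — which is why ii*) controls the interior rather than all of $x$, but one still has to verify the syllable count doesn't blow up — and (b) that inserting them does not spoil the ``$x_{i+1}'$ is not a piece'' step nor the geodesic estimate, i.e. that a subword of a relator cannot ``bridge'' an inserted $c_i$ in a way that lets it see long chunks of two different $x$'s from different relators. The geodesic bound via Lemma \ref{lemma:geodesic_condition} together with the not-a-piece property of the interiors is precisely what rules this out, but writing the case analysis carefully (tracking which of $u_i, c_i, u_j$ are pieces of which relator) is the delicate part; the rest is a routine adaptation of Lemma \ref{lemma:caf_case}.
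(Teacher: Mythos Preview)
Your plan has a genuine gap at the step ``$x_{i+1}'$ is not a piece.'' You justify this by arranging $f(\abs{r_i})>4i$ and then comparing $\abs{x_{i+1}'}\geq\abs{r_{i+1}}/i$ with $\abs{r_{i+1}}/f(\abs{r_{i+1}})$. But the pair condition of Definition~\ref{def:loacl:c1f} quantifies over \emph{all} relators $r\in\ov{\mc R}$, including $r_{i+1}$ itself: since $x_{i+1}$ is a common subword of $x_{i+1}$ and $r_{i+1}$, condition iii) already forces $\abs{x_{i+1}}<\abs{r_{i+1}}/f(\abs{r_{i+1}})$, hence $f(\abs{r_{i+1}})<i+1$. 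So the inequality $f(\abs{r_i})>4i$ you want is simply incompatible with the strong IPSC data, and the argument you import from Lemma~\ref{lemma:caf_case} does not transfer. More to the point, nothing in iii) prevents $x_{i+1}'$ from being a subword of some \emph{much longer} relator $r'$ (one only gets $\abs{x_{i+1}'}<\abs{r'}/f(\abs{r'})$), so $x_{i+1}'$ may well be a piece. If it is, a single relator $r$ can share with your $\gamma$ a subword bridging arbitrarily many blocks $x_lc_l$, and both the geodesic bound $\rho(t)\leq t/3$ and sublinearity of $\rho$ collapse. The single-letter connectors $c_i$ are certainly pieces and provide no barrier whatsoever.

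The paper's proof fixes exactly this: instead of hoping the $x_i$ themselves block bridging, it inserts between consecutive blocks a connector $q_i$ built from a \emph{fixed} subword $q'$ of a single short relator $r_0$, chosen with $\abs{q'}\geq\abs{r_0}/6$ so that $q'$ is not a piece in the ordinary $C'(1/6)$ sense; thus any relator containing $q_i$ must lie in $\ov{r_0}$. A three-letter buffer on each side of $q'$ and a short case analysis (this is where ii*) is really used, to control which letters abut the connector) then let one choose the endpoints of $q_i$ and of the trimmed block $y_i\supseteq x_i$ so that the one-letter extensions $q_iy_i^-$ and $y_{i-1}^+q_i$ are not subwords of \emph{any} relator. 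That absolute barrier replaces your ``not a piece'' step and makes the Case~1/Case~2 bookkeeping go through.
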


\begin{proof}
    We prove this by contradiction. Assume that $\mb G$ is $\sigma$-compact and 
    Let $(\rho_i)_{i\in\N}$ be a contraction exhaustion of $X = \cay$. As in the proof of Lemma \ref{lemma:caf_case}, we construct a Morse geodesic $\gamma$ whose intersection function $\rho$ satisfies $\rho\not\leq \rho_i$ for all $i$, contradicting the assumption that $\mb G$ is $\sigma$-compact. However, before we can start with the construction of $\gamma$, we need to do some technical setup.
    
    \textbf{Constructing the $x_i$.} Choose any relator $r_0\in \ov{\mc{R}}$ with $\abs{r_0}\geq 42$. For $i\geq 1$ choose $n_i\geq 6i\abs{r_0}$ such that $\rho_i(t)< t/i$ for all $t\geq n_i$ (such an integer always exists since $\rho_i$ is sublinear). Since $\pres$ satisfies the strong IPSC there exists a viable function $f$ and a sequence of relators $r_i = b_i^{l_i}x_ia_i^{j_i}y_i$ and a sequence of indices $k_i\geq i$ such that: 
    \begin{enumerate}
        \item $a_i, b_i\in \ov{\mc{S}}$, $l_1, j_i>0$, and $x_i$ is the interior of $b_i^{l_i}x_ia_i^{j_i}$,
        \item $\abs{r_i}\geq n_{k_i}$,
        \item $\abs{x_i}\geq \abs{r}/k_i$,
        \item the pair $(b_i^{l_i}x_ia_i^{j_i}, \mc R)$ satisfies the $C'(1/f)$--small-cancellation condition. 
    \end{enumerate}

    Unlike in the proof of Lemma \ref{lemma:caf_case}, we cannot guarantee that the words $x_i$ are not pieces or that $x_ix_{i+1}$ is reduced. Unfortunately, in Lemma \ref{lemma:caf_case}, this was a crucial step in showing that $\gamma$ is a Morse geodesic. Thus it is not enough to define $\gamma$ as the path labelled by the word $\Pi_{i\geq 1}^\infty x_i$. Instead, we have to do some surgery on the words $x_i$.

    \textbf{Performing surgery on the $x_i$.} Recall that $w^-$ ($w^+$) denotes the first (last) letter of a word $w$. Let $q$ be a subword of the relator $r_0$ such that $\abs{q}\leq \abs{r_0}/3$ and $\abs{q}-6\geq \abs{r_0}/6$. With this, $12\abs{q}\leq \abs{x_i}$ for all $i$. Write $q = s_1s_2s_3q's_4s_5s_6$ for some letters $s_j\in\ov{\mc S}$ for $1\leq j \leq 6$. Observe that $\abs{q'}\geq \abs{r_0}/6$ and hence $q'$ is not a piece. In the following we show that for each $i\geq 1$ we can choose words $u_i, v_i, w_i, z_i$ such that: 
    \begin{enumerate}
        \item for $i\geq 2$, $q_i = z_{i-1}q'u_i$ is a subword of $q$,
        \item for $i\geq 1$, $y_i = v_ix_iw_i$ is a subword of $b_i^{l_i}x_ia_i^{j_i}$,
        \item the word $w = \prod_{i=1}^\infty y_iq_{i+1}$ is reduced,
        \item for $i\geq 2$, neither $q_iy_i^{-}$ nor $y_{i-1}^+q_i$ is a subword of any relator.\label{4}
    \end{enumerate}

    We show how to choose the words $u_i$ and $v_i$, the words $w_i$ and $z_i$ can be chosen analogously. In the following, $\lambda$ denotes the empty word.

    \textbf{Case 1:} $s_4 = s_5$. 
    \begin{align*}
        u_i = s_4, \qquad v_i = \begin{cases}
            a^{j_i} &\text{if $a\not\in \{s_4, s_4^{-1}\}$}\\
            \lambda &\text{otherwise.}
        \end{cases}
    \end{align*}

    \textbf{Case 2:} $s_4\neq s_5 = s_6$.

    \begin{align*}
        u_i = s_4s_5, \qquad v_i = \begin{cases}
            a^{j_i} &\text{if $a\not\in \{s_5, s_5^{-1}\}$}\\
            \lambda &\text{otherwise.}
        \end{cases}
    \end{align*}
    
    \textbf{Case 3:} $s_4\neq s_5 \neq s_6$
    \begin{align*}
        u_i = \begin{cases}
            s_4 & \text{if $a\not\in\{s_4^{-1}, s_5\}$}\\
            s_4s_5 & \text{if $a\in\{s_4^{-1}, s_5\}$ but $a\not\in \{s_5^{-1}, s_6\}$}\\
            s_4 & \text{if $s_4 = s_6^{-1}$ and $x_i'^- = s_5^{-1}$}\\
            s_4s_5 & \text{if $s_4 = s_6^{-1}$ and $x_i'^-\neq s_5^{-1}$.}
        \end{cases}, \qquad v_i = \begin{cases}
            a^{j_i} &\text{if $s_4\neq s_6^{-1}$}\\
            \lambda &\text{otherwise.}
        \end{cases}
    \end{align*}
     Properties $(1)-(3)$ follow immediately from the choice. Furthermore, the choice assures that if $u_i = s_4$, then $y_i^-\neq s_5$ and if $u_i = s_4s_5$, then $y_i^-\neq s_6$. Since $q'$ (and hence $q_i$) is not a piece, this implies Property (4).

     \textbf{Constructing $\gamma$.} Let $\gamma$ be the path starting at the identity and labelled by $w = \prod_{i=1}^\infty y_iq_{i+1}$ and let $\rho$ be its intersection function. If $\rho(t)\leq t/3$ for all $t$, then $\gamma$ is a geodesic by Lemma \ref{lemma:geodesic_condition}. If in addition, $\rho$ is sublinear, then $\gamma$ is Morse by Lemma \ref{lem:4.14}. So next, we show that these two conditions indeed hold.

     Let $w$ be a common subword of a relator $r$ and $\gamma$. By \eqref{4}, we have one of the following, $w = uy_iv$, $w= uz$ or $w = zv$, where $u$ is a suffix of $q_{i-1}$, $z$ a subword of $y_i$ and $v$ a prefix of $q_i$.

     \textbf{Case 1:} $\ov{r} = \ov{r}_0$. We have that $\abs{r_0} < \abs{y_i}$ and hence $w = uz$ (or $w = zv$). If $z$ is empty, then by the choice of $q$, $\abs{w}\leq \abs{r_0}/3$. If $z$ is not empty, then $u$ (respectively $v$) is a piece by \eqref{4}. Since $\ov{r}_0\neq \ov{r}_i$, the word $z$ is a piece as well and hence $\abs{w}\leq \abs{r_0}/3$.
     
     \textbf{Case 2:} $\ov{r}  \neq \ov{r}_0$. Recall that the pair $(y_i, \mc R)$ satisfies the $C'(1/f)$--small-cancellation condition and $\abs{y_i} > 12\abs{q}$. Thus if $w = uy_iv$, then $\abs{y_i}<\abs{r}/f (\abs{r})$ and hence $\abs{w}< \abs{r}/f(\abs{r}) +2\abs{q}\leq \abs{r}/3$. On the other hand, if $w = uz$, then $u$ is a piece and thus $\abs{w} = \abs{u}+\abs{z} < \abs{r}/6 + \abs{r}/f(\abs{r}) < \abs{r}/3$. Also, $\abs{w}< \abs{r}/f(\abs{r}) + \abs{q}$. We can prove the same if $w = zv$.

    In any case, $\abs{w}\leq \abs{r}/3$, and $\abs{w}\leq \abs{r}/f(\abs{r}) + 2\abs{q}$. The former shows that indeed $\rho(t)\leq t/3$ for all $t$. Since $f$ is unbounded, the latter shows that $\rho$ is indeed sublinear. 
    
    To conclude the proof, it suffices to show that $\rho\not \leq \rho_i$ for all $i$. By construction of the sequence $(n_i)_i$ we have that $\rho_i(t)<t/i$ for all $t\geq n_i$. In particular, $\rho_i(\abs{r_i})<\abs{r_i}/k_i$. Since $y_i$ is a subword of $\gamma$ we have that $\rho(\abs{r_i})\geq \abs{y_i}\geq \abs{r_i}/k_i$. Thus indeed $\rho\not\leq \rho_i$.
\end{proof}

\subsection{Examples of groups with non-$\sigma$-compact Morse boundary}

We reprove Lemma \ref{lemma:caf_case} as a Corollary of Theorem \ref{prop:non-sigma-compact-condition}. Then we show that for at least some viable functions, the class of $C'(1/f)$--groups is not empty. In fact, it contains some of the standard examples of small-cancellation groups.

\begin{cor}\label{cor:c1f}
    Let $f$ be a viable function and let $G = \pres$ be a $C'(1/f)$--group. Then the Morse boundary $\mb G$ is not $\sigma$-compact.
\end{cor}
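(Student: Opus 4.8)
The plan is to deduce Corollary \ref{cor:c1f} from Theorem \ref{prop:non-sigma-compact-condition} by checking that every $C'(1/f)$--group satisfies the IPSC. So let $G = \pres$ be a $C'(1/f)$--group for some viable function $f$, and let $(n_i)_{i\in\N}$ be an arbitrary sequence of positive integers; I need to produce a viable function $\tilde f$ witnessing the IPSC for this sequence. The natural candidate is to take $\tilde f = f$ (the very function defining the group), since by Definition \ref{def:c1f} every piece $p$ of every relator $r\in\ov{\mc R}$ already satisfies $\abs{p}<\abs{r}/f(\abs{r})$; this is stronger than what condition (iii) of the IPSC asks, because condition (iii) only requires control on common subwords of $r'$ with a \emph{prefix} $x$ of some relator, and any such common subword that lies in two distinct relators is a piece (and if $r' = r$ then $\abs{x}$ will be forced small by our choice below anyway — but more simply, we may take $x$ short enough that $x$ itself is a piece, whence every common subword of $x$ with any relator is a piece). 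Actually the cleanest route: choose $x$ to be a piece, so that $(x,\mc R)$ automatically satisfies the $C'(1/f)$--condition.

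The key steps, in order, are as follows. First, fix the sequence $(n_i)$; WLOG assume it is strictly increasing. Second, set $\tilde f = f$. Third, given $K\geq 0$, I must find $i\geq K$ and a relator $r = xy\in\ov{\mc R}$ with $\abs{r}\geq n_i$, $\abs{x}\geq\abs{r}/i$, and $(x,\mc R)$ satisfying $C'(1/f)$. Since $\mc R$ is infinite and $\mc S$ is finite, there are relators of arbitrarily large length (only finitely many words of each bounded length), so pick a relator $r\in\ov{\mc R}$ with $\abs{r}$ as large as we like — specifically large enough that, setting $i := \max\{K, \lceil f(\abs{r})\rceil\}$ or some comparable choice, we can still arrange $\abs{r}\geq n_i$. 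Here is where I must be a little careful: $i$ depends on $\abs{r}$ (through $f(\abs{r})$), and $n_i$ grows with $i$; so I want $\abs{r}$ to grow fast enough relative to $f$ that the inequality $\abs{r}\geq n_i$ survives. Concretely, pick the relator $r$ so large that $\abs{r}\geq n_{\lceil f(\abs{r})\rceil}$ and $\lceil f(\abs{r})\rceil \geq K$; such an $r$ exists because $f$ is unbounded (so $\lceil f(\abs{r})\rceil\to\infty$ as $\abs{r}\to\infty$ along the relators) while — hmm, this is the subtle point, see below. Then set $i = \lceil f(\abs{r})\rceil$, and let $x$ be a prefix of $r$ of length exactly $\lceil\abs{r}/i\rceil = \lceil\abs{r}/\lceil f(\abs{r})\rceil\rceil$. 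Since $\abs{x}<\abs{r}/f(\abs{r})$... wait, that is not quite it either — I need $x$ to be a piece. A prefix of length $<\abs{r}/f(\abs{r})$ need not be a piece; a piece is a common prefix of two distinct elements of $\ov{\mc R}$. So instead: every common subword of $x$ with a relator $r'$ that is distinct (as an element of $\ov{\mc R}$, i.e. not a cyclic conjugate) from $r$ is a piece and hence has length $<\abs{r'}/f(\abs{r'})$; and a common subword of $x$ with (a cyclic shift of) $r$ itself has length $\leq\abs{x}\leq\abs{r}/i+1$, which is $<\abs{r}/f(\abs{r})$ provided $i > f(\abs{r})$ with a little room, i.e. taking $i = \lceil f(\abs{r})\rceil + 1$ and $\abs{r}$ large. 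So $(x,\mc R)$ satisfies $C'(1/f)$. And $\abs{x}\geq\abs{r}/i$ by construction, and $\abs{r}\geq n_i$ by the choice of $r$, and $i\geq K$. That verifies the IPSC.

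The main obstacle I expect is the interlocked growth condition: I need a single relator $r$ satisfying simultaneously $\abs{r}\geq n_i$ \emph{and} $i\approx f(\abs{r})\geq K$, where $n_i$ is an adversarial sequence that could grow astronomically fast in $i$, and $f$ could grow arbitrarily slowly. If $f$ grows very slowly, then $i = f(\abs{r})$ is small even for huge $\abs{r}$, so $n_i$ is small, so $\abs{r}\geq n_i$ is easy — good. If $f$ grows fast, then $i$ is large, $n_i$ is large, but we also get to take $\abs{r}$ large; the question is whether $\abs{r}\geq n_{f(\abs{r})}$ has solutions with $\abs{r}$ among the relator lengths. This is genuinely where viability and the structure of $C'(1/f)$ must be used. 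The resolution: since $f$ is weakly increasing, $f(\abs{r})\leq f$ evaluated... no — rather, observe that we get to choose $i$ \emph{freely} subject to $i\geq K$ and then need a relator with $\abs{x}\geq\abs{r}/i$ and $(x,\mc R)$ being $C'(1/f)$; the constraint is really only that $x$ of length about $\abs{r}/i$ avoids being too-long a common subword, i.e. $\abs{r}/i \lesssim \abs{r}/f(\abs{r})$, i.e. $i\gtrsim f(\abs{r})$, i.e. $\abs{r}\lesssim f^{-1}(i)$ (an upper bound on $\abs{r}$, not a lower bound). Combined with $\abs{r}\geq n_i$, I need $n_i\leq f^{-1}(i)$ roughly — but that's a constraint on the relation between $n_i$ and $f$, and the IPSC lets \emph{me} choose $f$! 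So in fact the right move, mirroring Lemma \ref{lemma:non_sigma_to_IPSC}, may be to choose $\tilde f$ smaller than $f$: define $\tilde f(t) = \max\{6,\min(f(t), g(t))\}$ for a suitable slowly-growing $g$ tailored to $(n_i)$, ensuring both that pieces still satisfy the $C'(1/\tilde f)$ bound (automatic since $\tilde f\leq f$) and that the growth-matching inequality becomes solvable. I would work out the exact definition of $g$ so that for each $K$ there is a relator length $m$ (necessarily present since $\mc R$ is infinite) with $m\geq n_i$ where $i := $ a value $\geq K$ chosen so that $m/i\leq m/f(m)$, i.e. $i\geq f(m)$; picking $i = \max\{K,\lceil f(m)\rceil+1\}$ and then requiring $m\geq n_i$ pins down how fast $g$ (hence the cap on $f$) must be — and since we may also shrink $\tilde f$ to grow as slowly as we please subject to $\tilde f\geq 6$ and $\tilde f\to\infty$, there is enough freedom. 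Spelling out this choice of $\tilde f$ cleanly is the one step that needs care; everything else is bookkeeping with the definitions.
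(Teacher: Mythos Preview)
Your approach is correct and coincides with the paper's: verify the IPSC via Theorem~\ref{prop:non-sigma-compact-condition} using a viable function $\tilde f\le f$ capped down so that the growth-matching obstruction you isolate (needing $\abs r\ge n_i$ while $i\gtrsim f(\abs r)$) disappears. You have identified every idea in the argument; the only thing left unwritten is the explicit $\tilde f$, which you flag yourself.

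For comparison, here is how the paper completes that step. First choose relators $r_k\in\ov{\mc R}$ with $\abs{r_k}>\max\{k,\,n_k,\,\abs{r_{k-1}}\}$ (possible since $\mc R$ is infinite over a finite alphabet), and set
\[
\tilde f(n)=\begin{cases} f(n) & n<\abs{r_{12}},\\ \min\{f(n),\,k/2\} & \abs{r_k}\le n<\abs{r_{k+1}},\ k\ge 12.\end{cases}
\]
Given $K$, put $i=k$ for any $k\ge\max\{K,12\}$ and let $x$ be a prefix of $r_k$ with $\abs{r_k}/k\le\abs x<2\abs{r_k}/k$. Conditions (i) and (ii) are immediate from $\abs{r_k}>n_k$. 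For (iii), any common subword $p$ of $x$ and $r'\in\ov{\mc R}$ is either a piece---hence $\abs p<\abs{r'}/f(\abs{r'})\le\abs{r'}/\tilde f(\abs{r'})$---or else $r'\in\ov{r_k}$, in which case $\abs p\le\abs x<2\abs{r_k}/k\le\abs{r'}/\tilde f(\abs{r'})$ since $\tilde f(\abs{r_k})\le k/2$. This is exactly your $\tilde f=\max\{6,\min(f,g)\}$ with $g$ the step function taking value $k/2$ on $[\abs{r_k},\abs{r_{k+1}})$; note that the choice $\abs{r_k}>n_k$ is what decouples $i$ from $f(\abs{r})$ and dissolves the circularity you were worried about.
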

\begin{proof}
    By Theorem \ref{prop:non-sigma-compact-condition}, it is enough to show that $G$ satisfies the IPSC. Let $(n_k)_k$ be a sequence of integers. We can choose a sequence $(r_k)_k$ of relators such that $\abs{r_k}> \max\{k, n_k, \abs{r_{k-1}}\}$ for all $k$. Define the function $f': \N\to \R_+$ via 
    \begin{align*}
        f'(n) = \begin{cases}
            f(n)& \text{if $n< \abs{r_{12}}$}\\
            \min\{f(n), \frac{k}{2}\} &\text{if $\abs{r_k}\leq n <\abs{r_{k+1}}$ for some $k\geq 12$.}
        \end{cases}
    \end{align*}
    Let $k\geq 12$ and let $x$ be a prefix of a relator $r_k$ satisfying $2\abs{r_k}/k> \abs{x}\geq \abs{r_k}/k$. The pair $(x, \mc R)$ satisfies the $C'(1/f')$--small-cancellation condition. Thus $G$ indeed satisfies the IPSC. 
\end{proof}

\begin{ex}\label{ex:cf}
    Let $\mc S  = \{x, y\}$. For $i\geq 14$ define 
    \begin{align*}
        r_i = \prod_{j=0}^{2i} xy^{i^2+j}.
    \end{align*}
    Note that $\abs{r_i}\geq i^3$. Let $\mc R = \{r_i\}_{i\geq 14}$ and let $f: \N\to \R_+$ be defined as follows: 
    \begin{align*}
        f(i) = \begin{cases}
            6 & \text{if $i < 14$ }\\
            \ceil{\frac{i^3}{2(i+1)^2}}& \text{otherwise.}
        \end{cases}
    \end{align*}
    We show below that the group $G = \pres$ is a $C'(1/f)$--group. As a consequence, its Morse boundary is non-$\sigma$-compact by Corollary \ref{cor:c1f} and its existence implies Theorem \ref{thm:main} (1).
    \smallskip
    Any piece $p$ of a relator $r\in \ov{r_i}$ has the form $y^k$, $y^{-k}$, $y^kxy^{k'}$  or $y^{-k}x^{-1}y^{-k'}$ for some $0\leq k, k'\leq (i+1)^2-1$. Hence $\abs{p}< 2(i+1)^2\leq \abs{r}/f(\abs{r})$. Thus $G = \pres$ is indeed a $C'(1/f)$--group.
\end{ex}

\section{Strongly Contracting Morse boundary}\label{sec:strongly_contracting}

In this section, we construct a class of examples of $C'(1/6)$--groups $G = \pres$ where $\abs{\mc R}$ is infinite and every geodesic Morse ray in $X = \cay$ is strongly contracting. Every geodesic Morse ray being strongly contracting implies that the Morse boundary of $G$ is $\sigma$-compact and hence the examples constructed here stand in contrast to the examples of Section \ref{sec:non-sigma-compact}. 

In light of Lemma \ref{lem:4.14} we have to ensure that for these examples, whenever a geodesic has sublinear intersection function, it has to have bounded intersection function. We ensure this by making sure that the overlaps in relators are `large enough' while not violating the $C'(1/6)$--small-cancellation condition.

More precisely, we start with a certain set of relators and say that they are `level 1' relators. We then construct the rest of the relators inductively by level. During the construction we make sure that relators of level $i+1$ are concatenations of subwords of level $i$ relators. Doing this the right way, `long' subwords of a relator contain a substantial fraction of a lower level relator, which can be used to show that every geodesic ray is either not Morse or strongly contracting. 

\begin{defn}
We say that a word $w = s_1\ldots s_n$ is non-periodic if $\abs{\ov{w}} = 2n$. That is, none of the cyclic shifts of $w$ or their inverses are equal to $w$.
\end{defn}

\subsection{Construction}

Let $N\geq 28$, let $L$ be a multiple of $N$ and let $\mc S$ be a finite set of formal variables. Let $r_1^1, r_2^1$ and $r_3^1$ be distinct non-periodic words over $\mc S$ of length $L$ such that $\mc R_1 = \{r_1^1, r_2^1, r_3^1\}$ satisfies the $C'(1/(4N))$--small-cancellation condition. For example $\mc S = \{s_1, s_2, \ldots, s_{4N}\}$, and $r_i^1 = s_{Ni +1}s_{Ni+2}\ldots s_{Ni+N}$ for $1\leq i \leq 3$. We think of $\mc R_1$ as the set of level 1 relators.
\begin{rem}
    It is important that $r_1^1, r_2^1$ and $r_3^1$ are words over $\mc S$ instead of words over $\ov{\mc S}$. Because of this, they are reduced and cyclically reduced. The same holds for all words constructed in the remainder of this section.
\end{rem}

\textbf{Convention:} For the rest of this section, unless stated otherwise, $1\leq i, i', i'' \leq 3$ and $1\leq  l, l', l''\leq N$ are integers. Furthermore, we think of $i$, $i$ and $i''$ as integers modulo 3 and $l, l'$ and $l''$ as integers modulo $N$. So if $i=3$, then $i+1$ is equal to $1$.

Write $r_i^1 = y_{(i, 1)}^1y_{(i, 2)}^1\ldots y_{(i, N-1)}^1y_{(i,N)}^1$ for subwords $y_{(i,l)}^1$ of length $\abs{r_i^1}/N$. For $k\geq 1$ iteratively define 
\begin{align*}
    y_{(i, l)}^{k+1}=  \prod_{j=1}^{N}y_{(i , j)}^ky_{(i+1, l)}^k.
\end{align*}
In other words, if $a =y_{(i, l)}^k$ and $b_j=y_{(i+1, j)}^k$, then 
\begin{align*}
    y_{(i,l)}^{k+1} = \prod_{j=1}^{N}ab_j.
\end{align*}
For $k\geq 2$ define
\begin{align}
    r_{i}^k = \prod_{j=1}^{N} y_{(i, j)}^k,
\end{align}
and define $\mc R = \cup_{i=1}^3\cup_{k=1}^\infty r_i^k$. We think of relators $r_i^k$ as relators of level $k$. With this definition we assure the following; while a particular word $y^k_{(i, l)}$ appears several times in the definition of the $y^{k+1}$, any word $y_{(i, l)}^ky_{(i', l')}^k$ appears at most once in the definition of the $y^{k+1}$.

This property is a key step in showing that $\mc R$ satisfies the $C'(1/6)$--small-cancellation condition, while ensuring that if a geodesic has a large common subword with relator $r_*^k$, then it also has a relatively large common subword with a relator $r_*^{k'}$ for some $k'<k$.

\begin{lem}\label{lemma:structure_of_ys}
Let $1\leq k'< k$. We can write $y_{(i,l)}^{k}$ as the product
\begin{align*}
    y_{(i,l)}^{k} = \prod_{j=1}^m y_{(i_j, l_j)}^{k'},
\end{align*}
for some integer $m\geq 2$ such that:
\begin{enumerate}  
    \item $i_1\neq i_2$ and $i_{m-1}\neq i_m$,
    \item if $i_{l} = i_{l+1}$, then $i_{l-1}\neq i_l$ and $i_{l+2}\neq i_{l+1}$.
\end{enumerate}
\end{lem}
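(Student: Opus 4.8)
The plan is to prove the structural statement by induction on $k - k'$, unwinding the recursive definition of the $y^{k}_{(i,l)}$ one level at a time and tracking how the index sequence $(i_j)$ behaves.

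\textbf{Base case.} When $k = k'+1$, the definition gives directly
\begin{align*}
    y_{(i,l)}^{k'+1} = \prod_{j=1}^{N} y_{(i,j)}^{k'} y_{(i+1,l)}^{k'},
\end{align*}
so the index sequence is $(i, i+1, i, i+1, \dots, i, i+1, i, i+1)$ with $N$ blocks $(i, i+1)$, i.e. it alternates strictly between $i$ and $i+1$. Here $m = 2N \geq 2$ (recall $N \geq 28$), $i_1 = i \neq i+1 = i_2$, $i_{m-1} = i \neq i+1 = i_m$, and since the sequence strictly alternates, property (2) is vacuously true: there is never an index $\ell$ with $i_\ell = i_{\ell+1}$. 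So both properties hold, and in fact the stronger statement that consecutive indices always differ holds at this level.

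\textbf{Inductive step.} Suppose the claim holds for $k - 1 - k'$ (with $k - 1 > k'$); I want it for $k - k'$. First expand one level: $y_{(i,l)}^{k} = \prod_{j=1}^N y_{(i,j)}^{k-1} y_{(i+1,l)}^{k-1}$, a concatenation of $2N$ words of level $k-1$ whose top-level index sequence is the strictly alternating pattern $(i, i+1, i, i+1, \dots)$ described in the base case. Now replace each factor $y_{(i', l')}^{k-1}$ by its level-$k'$ expansion from the inductive hypothesis. The resulting index sequence $(i_j)$ is the concatenation of $2N$ blocks, where each block comes from expanding some $y_{(i',l')}^{k-1}$ and hence (by the inductive hypothesis) starts with an index $\neq i'$... wait — I need to be careful here: the inductive hypothesis says $i_1 \neq i_2$ and $i_{m-1}\neq i_m$ for the block, i.e. the \emph{first two} and \emph{last two} entries of each block differ among themselves, but it does not directly say what the first entry equals. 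The cleaner route is to strengthen the induction: I would instead prove the stronger statement that each expansion $y_{(i',l')}^{k'+t}$ has index sequence whose first entry is $i'$ if $t$ is even and $i'+1$ if... no. Let me reconsider: actually the right strengthening to carry is \emph{property (2) as stated, plus the knowledge of the first two and last two indices}, together with the observation that when we glue block $B$ (ending in some pair $a,b$ with $a\neq b$) to block $B'$ (starting with some pair $c,d$ with $c \neq d$), we only create a new ``equal adjacent pair'' at the seam if $b = c$; and if that happens, we must check that $a \neq b$ (true, from end of $B$) and $d \neq c = b$ (true, from start of $B'$), which is exactly what property (2) demands at the seam. So property (2) is preserved at seams precisely because each block individually has distinct first-two and distinct last-two indices, and property (2) inside each block is the inductive hypothesis. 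Properties (1) for the whole word follow since the first block's first two indices and the last block's last two indices are inherited directly.

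\textbf{Main obstacle.} The delicate point — and the one I would write out carefully — is the seam analysis: verifying that no \emph{three} consecutive equal indices can appear, and that whenever two consecutive indices coincide at a block boundary, the flanking indices differ. This needs the base-case fact that within a single one-step expansion the index sequence strictly alternates (so the last two entries of any block, and the first two, are automatically distinct), combined with property (2) holding inside each sub-block. Everything else (that $m \geq 2$, that the lengths work out) is immediate from the recursion. I would therefore set up the induction so that the statement being proved includes enough boundary information about the index sequences to make the seam check go through; with that strengthening the argument is a routine concatenation bookkeeping.
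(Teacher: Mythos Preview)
Your proposal is correct and follows the same inductive approach as the paper, which simply says the base case $k'=k-1$ is immediate from the definition and the rest follows by induction on $k$. Your seam analysis (using property (1) on each block to guarantee that the last two and first two indices of adjacent blocks differ, so any new equal pair $b=c$ at a boundary is automatically flanked by $a\neq b$ and $d\neq c$) is exactly the bookkeeping the paper suppresses; no strengthening of the induction is needed, as you eventually realise.
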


\begin{proof}
    If $k' =k-1$, this follows from the definition of $y_{(i, l)}^{k}$. Otherwise it follows by induction on $k$.
\end{proof}

\begin{lem}\label{lemma:structure_of_ri}
Let $1\leq k' < k$ we can write $r_i^{k}$ as the concatenation
\begin{align*}
    r_i^{k} = \prod_{j=1}^m y_{(i_j, l_j)}^{k'},
\end{align*}
for some integer $m\geq 2$ such that 
\begin{enumerate}
    \item $i_1\neq i_2$ and $i_{m-1}\neq i_m$.
    \item if $i_{l} = i_{l+1}$, then $i_{l-1}\neq i_l$ and $i_{l+2}\neq i_{l+1}$.
\end{enumerate}
\end{lem}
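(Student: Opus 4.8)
The plan is to reduce everything to Lemma~\ref{lemma:structure_of_ys} applied to the individual factors of $r_i^k$. Since $1\le k'<k$ forces $k\ge 2$, we have $r_i^k=\prod_{j=1}^{N}y_{(i,j)}^{k}$ by construction. For each $j$, Lemma~\ref{lemma:structure_of_ys} supplies a decomposition $y_{(i,j)}^{k}=\prod_{j'=1}^{m_j}y_{(i_{j,j'},l_{j,j'})}^{k'}$ with $m_j\ge 2$, with $i_{j,1}\neq i_{j,2}$ and $i_{j,m_j-1}\neq i_{j,m_j}$, and with no three consecutive indices equal. Concatenating these decompositions over $j=1,\dots,N$ yields a decomposition $r_i^{k}=\prod_{j=1}^{m}y_{(i_j,l_j)}^{k'}$ of the required shape, of total length $m=\sum_{j}m_j\ge 2N\ge 2$. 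It then remains only to check that conditions (1) and (2) are not destroyed by the concatenation.

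Condition (1) is immediate: the first two indices of the concatenated sequence are the first two indices of the decomposition of $y_{(i,1)}^{k}$ (a decomposition of length at least $2$), hence distinct by Lemma~\ref{lemma:structure_of_ys}(1); symmetrically, the last two indices of the concatenated sequence are the last two of the decomposition of $y_{(i,N)}^{k}$, again distinct.

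For condition (2), within the block coming from a single $y_{(i,j)}^{k}$ the claim holds by Lemma~\ref{lemma:structure_of_ys}(2), so the only thing to rule out is a run of three equal consecutive indices created at a \emph{seam} between the decomposition of $y_{(i,j)}^{k}$ and that of $y_{(i,j+1)}^{k}$. Because every block has length at least $2$, any three consecutive positions in the concatenated sequence meet at most one seam, so it suffices to look at one seam at a time. A run of three equal indices straddling the seam between block $j$ and block $j+1$ would force either $i_{j,m_j-1}=i_{j,m_j}=i_{j+1,1}$ or $i_{j,m_j}=i_{j+1,1}=i_{j+1,2}$; the first contradicts $i_{j,m_j-1}\neq i_{j,m_j}$ and the second contradicts $i_{j+1,1}\neq i_{j+1,2}$. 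Hence condition (2) holds along the whole sequence.

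The argument has no genuine obstacle; the one point worth emphasising is that part (1) of Lemma~\ref{lemma:structure_of_ys}---the distinctness of the first two and of the last two indices in each factor's decomposition---is exactly what prevents a forbidden triple from appearing when the factor-decompositions are glued together, so parts (1) and (2) of that lemma are used in tandem rather than independently.
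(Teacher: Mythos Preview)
Your proof is correct and follows exactly the approach the paper intends: the paper's proof is the single line ``This is an immediate consequence of the definition of $r_i^{k}$ and Lemma~\ref{lemma:structure_of_ys},'' and you have simply spelled out the details of that deduction. Your seam argument using part~(1) of Lemma~\ref{lemma:structure_of_ys} to rule out forbidden triples at block boundaries is the only non-automatic point, and you handle it correctly.
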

\begin{proof}
    This is an immediate consequence of the definition of $r_i^{k}$ and Lemma \ref{lemma:structure_of_ys}.
\end{proof}

The following lemma explores how a word $y_{(i, l)}^k$ can be a subword of a product of terms $y_{(i', l')}^k$ and is a key ingredient in showing that $\mc R$ satisfies the $C'(1/6)$--small-cancellation condition.

\begin{lem}\label{lemma:star}
Let $k\geq 1$ and let $w = \prod_{j=1}^m y_{(i_j ,l_j)}^k$ be a word. If $y_{(i, l)}^k$ is a subword of $w$, then there exists an index $j$ such that $(i_j, l_j) = (i, l)$. Furthermore, if $w$ can be written as $uy_{(i, l)}^kv$, then $u = \prod_{j=1}^{n-1}y_{(i_j, l_j)}^k$, for some $1\leq n \leq m$ with $(i_n, l_n) = (i, l)$.
\end{lem}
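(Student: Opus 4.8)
The plan is to work by induction on $m$, exploiting the rigidity of the "syllable" decomposition into blocks $y_{(i',l')}^k$. The key structural fact I would isolate first is that the words $y_{(i,l)}^k$ are, up to inverses, pairwise non-overlapping in a strong sense: because they were built from the non-periodic level-1 words and the $C'(1/(4N))$--condition, no $y_{(i,l)}^k$ can be a subword of $y_{(i',l')}^k y_{(i'',l'')}^k$ straddling the "seam" between two consecutive blocks unless it coincides exactly with one of them. Concretely, I would argue that a prefix of $y_{(i',l')}^k$ that is also a suffix of $y_{(i'',l'')}^k$ has length less than $\abs{y_{(i,l)}^k}/(\text{something})$ — i.e.\ it is a piece, or controlled by the small-cancellation hypothesis on $\mc R_1$ propagated through the inductive construction — and hence cannot be long enough to absorb an entire block. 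This "alignment" statement is the engine of the whole argument.

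Granting the alignment statement, here are the steps in order. First, suppose $y_{(i,l)}^k$ is a subword of $w = \prod_{j=1}^m y_{(i_j,l_j)}^k$, and write $w = u\, y_{(i,l)}^k\, v$. Look at where the initial letter of the distinguished copy of $y_{(i,l)}^k$ falls: it lies inside some block $y_{(i_n,l_n)}^k$, so $u = \bigl(\prod_{j=1}^{n-1} y_{(i_j,l_j)}^k\bigr)\, u'$ where $u'$ is a (possibly empty) proper prefix of $y_{(i_n,l_n)}^k$. If $u'$ is empty, the subword $y_{(i,l)}^k$ starts exactly at the seam before block $n$; then by the alignment statement its first $\abs{y_{(i_n,l_n)}^k} = \abs{y_{(i,l)}^k}$ letters must coincide with all of $y_{(i_n,l_n)}^k$, forcing $(i_n,l_n)=(i,l)$ and $v = \prod_{j=n+1}^m y_{(i_j,l_j)}^k$, which is exactly the desired conclusion. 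If $u'$ is a nonempty proper prefix, I want to derive a contradiction: the word $y_{(i,l)}^k$ now begins strictly inside block $n$ and, since $\abs{y_{(i,l)}^k} = \abs{y_{(i_n,l_n)}^k}$, it must spill over into block $n+1$; but then a nonempty proper prefix of $y_{(i_{n+1},l_{n+1})}^k$ equals a suffix of $y_{(i,l)}^k$ while a nonempty proper suffix of $y_{(i_n,l_n)}^k$ equals a prefix of $y_{(i,l)}^k$, and concatenating these overlap relations contradicts the alignment statement (the overlap is too long to be a piece). To make the induction bite, I would phrase the alignment statement so that "too long to be a piece" follows from Lemma \ref{lem:isometric_embedding} and the fact, provable by a parallel induction using Lemma \ref{lemma:structure_of_ys}, that a common subword of two distinct-looking level-$k$ pieces is bounded by a fixed small fraction of $\abs{y_{(i,l)}^k}$.

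The main obstacle I anticipate is precisely making the alignment statement airtight without circularity: the claim that $\mc R$ satisfies $C'(1/6)$ is what ultimately controls overlaps, but this lemma is stated as a \emph{tool toward} that claim, so I cannot freely invoke $C'(1/6)$ for $\mc R$. The clean way out is to prove a self-contained quantitative statement at level $k$ by induction on $k$ alongside this lemma — namely, that any common subword of $y_{(i,l)}^k$ and $y_{(i',l')}^k w'$ with $(i,l)\neq(i',l')$, or straddling a seam, has length $< \abs{y_{(i,l)}^k}/N$ (or some such bound) — using only the $C'(1/(4N))$--condition at level 1 and the recursive structure $y^{k+1}_{(i,l)} = \prod_j y^k_{(i,j)} y^k_{(i+1,l)}$ together with property (2) of Lemma \ref{lemma:structure_of_ys} (no block $y^k_{(i,l)}y^k_{(i',l')}$ repeats). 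Once that quantitative bound is in hand, the combinatorial case analysis above goes through mechanically. A secondary, lower-stakes nuisance is the bookkeeping at the two ends of $w$ (when $n=1$ or the copy reaches block $m$), which is handled by the boundary conditions $i_1\neq i_2$, $i_{m-1}\neq i_m$ from Lemma \ref{lemma:structure_of_ri}, but these are routine.
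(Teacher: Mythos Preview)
Your reduction to the two-block ``seam'' case and the plan to induct on $k$, with the $k=1$ base handled by the $C'(1/(4N))$ condition on $\mc R_1$, match the paper's structure exactly. Where you diverge is the inductive step. You propose to carry along a quantitative overlap bound (common subwords of distinct $y^k$-blocks, or subwords straddling a seam, have length $<|y^k|/N$) and then argue that the two halves of a straddle are each too short. The paper instead runs a multiplicity argument: in the level-$(k{-}1)$ decomposition of $y_{(i,l)}^k$ the block $y_{(i+1,l)}^{k-1}$ occurs $N$ times, whereas it occurs at most once in any $y_{(i',l')}^k$ with $(i',l')\neq(i,l)$. Applying the lemma at level $k-1$ to the level-$(k{-}1)$ decomposition of the product $bc$ forces one of $b,c$ to be $y_{(i,l)}^k$ itself, and then the unique positions of $y_{(i+1,1)}^{k-1}$ and $y_{(i+1,N)}^{k-1}$ pin down the alignment.

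The paper's route is cleaner precisely at the point you flag as the main obstacle. Your quantitative bound, as stated, only covers the case of \emph{distinct} indices, so it does not directly handle $(i,l)=(i_n,l_n)$ with a nontrivial shift, i.e.\ a genuine self-overlap of $y_{(i,l)}^k$; the phrase ``or straddling a seam'' gestures at this but is not made precise, and you would still need an aperiodicity statement for the $y^k$. This is repairable, but the multiplicity argument sidesteps it entirely: once you know one of the two blocks has the right index pair, the marker blocks $y_{(i+1,1)}^{k-1}$ and $y_{(i+1,N)}^{k-1}$ (each appearing exactly once in $y_{(i,l)}^k$) force exact alignment without any overlap estimate. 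Two minor points: your appeals to Lemma~\ref{lem:isometric_embedding} and to the boundary conditions of Lemma~\ref{lemma:structure_of_ri} are misplaced here --- the former concerns embedded cycles in the Cayley graph and plays no role in this purely combinatorial statement, and the latter is used downstream when the present lemma is \emph{applied}, not in its proof.
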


\begin{proof}
    If $a = y_{(i, l)}^1$ is a subword of $w$, then either $w = uav$, there exists $1\leq j<m$ such that $y_{(i_j, l_j)}^ky_{(i_{j+1}, l_{j+1})}^k = u'pqv'$ for some words $u', v', p, q$ with $pq =a$. It suffices to show that either $u'$ is empty and $(i, l) = (i_j, l_j)$ or $v'$ is empty and $(i, l) = (i_{j+1}, l_{j+1})$. We first show this for $k=1$ and then by induction on $k$. 
    
    If $k=1$ and the statement above does not hold, then $p$ and $q$ are both pieces. Hence $L/N = \abs{a} = \abs{p}+\abs{q} < L/(2N)$, a contradiction.

    Assume the statement holds for $k-1$. Define $b = y_{(i_j, l_j)}^k$ and $c =y_{(i_{j+1}, l_{j+1})}^k$. Recalling their definitions, we can write $a, b$ and $ c$ as product of words of the form $y_{(i', l')}^{k-1}$. The term $y_{(i, l)}^{k-1}$ appears $N$ times in the product of $a$, and appears at most once in the product of $y_{(i', l')}^k$ unless $(i', l') = (i, l)$. Applying the statement for $k-1$ and $bc$ we get that $b$ or $c$ contains $y_{(i, l)}^{k-1}$ multiple times, implying that $(i_j, l_j) = (i, l)$ or $(i_{j+1}, l_{j+1}) = (i, l)$. Moreover, the terms $y_{(i+1, 1)}^{k-1}$ and $y_{(i+1, N)}^{k-1}$ appear exactly once (and hence at a unique spot) when writing $y_{(i, l)}^k$ as a product, so we actually have either $(i_j, l_j) = (i, l)$ and $u'$ is the empty word or $(i_{j+1}, l_{j+1}) = (i, l)$ and $v'$ is the empty word, which shows that the statement holds for $k$.
\end{proof}

Instead of showing that $\mc R$ satisfies the $C'(1/6)$--small-cancellation condition we show a slightly stronger result, which we need for the next section. 

\begin{lem}
The set of relators $\mc R =$ satisfies the $C'(1/7)$--small-cancellation condition.  
\end{lem}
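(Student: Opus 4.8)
## Proof Plan

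The plan is to bound the length of any piece of any relator $r_i^k\in\ov{\mc R}$ relative to $\abs{r_i^k}$, showing every piece has length $<\abs{r_i^k}/7$. The key combinatorial input is Lemma \ref{lemma:star}, together with the structural Lemmas \ref{lemma:structure_of_ys} and \ref{lemma:structure_of_ri}, which control how blocks $y^k_{(i,l)}$ sit inside products of such blocks. I would first reduce the problem: a piece $p$ of $r_i^k$ is a common subword of $r_i^k$ and some $r_{i'}^{k'}\in\ov{\mc R}$ with $(i,k)\ne(i',k')$ up to cyclic rotation/inversion. Since the $r_*^m$ are built by iterated concatenation, it is cleanest to argue by induction on $\max\{k,k'\}$, treating the case $k=k'=1$ directly from the $C'(1/(4N))$ hypothesis on $\mc R_1$ (which gives pieces $<L/(4N)<|r|/7$, say, using $N\ge 28$).

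Next I would handle the inductive step. Suppose $p$ is a common subword of $r_i^k$ and $r_{i'}^{k'}$ with, WLOG, $k\ge k'\ge 1$ and $k\ge 2$. Write both relators as products of level-$(k-1)$ (or level-$k'$, whichever is lower, but I expect level $k-1$ to be the natural choice when $k'\le k-1$, and level $k'=k-1$ directly otherwise) blocks via Lemma \ref{lemma:structure_of_ri}. If $p$ is long — say $\abs{p}\ge \abs{r_i^k}/7$, which is at least one or two full blocks $y^{k-1}_{*}$ since each such block has length $\abs{r_i^k}/N^{\,?}$ and $N\ge 28$ — then by Lemma \ref{lemma:star} the occurrence of $p$ inside $r_i^k$ forces $p$ to contain a complete block $y^{k-1}_{(a,b)}$ as a subword aligned to the block decomposition, and similarly inside $r_{i'}^{k'}$. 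The crucial point, which the paragraph before Lemma \ref{lemma:structure_of_ys} emphasizes, is that a \emph{pair} $y^{k-1}_{(a,b)}y^{k-1}_{(a',b')}$ occurs at most once across the whole next level. So I would argue: $p$ contains at most two full level-$(k-1)$ blocks, because three consecutive full blocks would pin down the position uniquely in both $r_i^k$ and $r_{i'}^{k'}$ (forcing them to be rotations of each other, contradiction), and the index constraints (1)–(2) in Lemma \ref{lemma:structure_of_ri} prevent the degenerate repetitions that could otherwise occur. Hence $\abs{p}$ is at most two full blocks plus two partial blocks, the partials being pieces at level $k-1$; applying the inductive hypothesis to the partials and counting, $\abs{p} < 2\cdot\frac{\abs{r_i^k}}{N} + 2\cdot\frac{1}{7}\cdot\frac{\abs{r_i^k}}{N} < \frac{\abs{r_i^k}}{7}$, using $N\ge 28$ generously.

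The main obstacle I anticipate is the bookkeeping in the subcase $k=k'$ with $i\ne i'$ versus $k>k'$: when $k>k'$ one must be careful that a long $p\sq r_{i'}^{k'}$, being short relative to $r_i^k$, is still long enough relative to $r_{i'}^{k'}$ to trigger the ``contains a full lower-level block'' dichotomy, and to make sure the recursion bottoms out correctly. The honest way to organize this is to prove a single clean statement — something like: for every common subword $p$ of two distinct (mod rotation/inversion) elements of $\ov{\mc R}$, and every relator $r\in\ov{\mc R}$ containing $p$, one has $\abs{p}<\abs{r}/7$ — and induct on the level of $r$. A secondary nuisance is verifying that a single full pair of consecutive blocks determines the position uniquely even when the ambient word is a \emph{rotation} of $r_i^k$ (since we work in $\ov{\mc R}$), which requires checking that the cyclic word of $r_i^k$ also has each consecutive block-pair appearing at most once; this follows by the same ``pair appears at most once'' mechanism applied one level up, combined with the index conditions ruling out the wraparound coincidence. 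Once these two points are pinned down, the length count is routine arithmetic in $N$.
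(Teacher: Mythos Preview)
Your plan takes a different and more laborious route than the paper, which does not induct on the level at all; the only induction needed is already packaged inside Lemma~\ref{lemma:star}. The paper argues directly. Writing $r,r'\in\ov{\mc R}$ as cyclic shifts of $r_i^k$ and $r_{i'}^{k'}$ with $k'\ge k$, one assumes $|p|\ge|r|/7$ for the \emph{shorter} relator $r$ and decomposes at level $k$ (not $k-1$). If $k'=k$, then since $N\ge28$ the word $p$ contains a full block $y_{(i,l)}^k$; Lemma~\ref{lemma:star} applied to the level-$k$ decomposition of $r'$ forces $i'=i$ and pins the rotation, yielding $r=r'$. If $k'>k$, then (again using $N\ge28$) $p$ contains three consecutive blocks $y_{(i,l)}^k y_{(i,l+1)}^k y_{(i,l+2)}^k$, all with first index $i$; Lemma~\ref{lemma:star} then forces three consecutive blocks with first index $i$ in the level-$k$ decomposition of $r_{i'}^{k'}$, contradicting condition~(2) of Lemma~\ref{lemma:structure_of_ri}. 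That is the whole argument.

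The obstacles you flag are real, and the paper's choices dissolve them. Working relative to the shorter relator and decomposing at \emph{its} level $k$, rather than at $k-1$, is exactly what makes the $k'>k$ case clean and yields the required bound $|p|<\min(|r|,|r'|)/7$ automatically; your WLOG $k\ge k'$ followed by aiming at $|p|<|r_i^k|/7$ targets the wrong (larger) relator. Your ``pair appears at most once'' mechanism is replaced by the much weaker and already-established fact that no three consecutive level-$k$ blocks of $r_{i'}^{k'}$ share a first index, which is precisely what Lemma~\ref{lemma:structure_of_ri} was set up to deliver; this spares you from verifying block-pair uniqueness across all relators of all higher levels and across cyclic wrap-around, an assertion you would otherwise have to prove separately. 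Your block-size arithmetic is also off (level-$(k-1)$ blocks in $r_i^k$ have length $|r_i^k|/(2N^2)$, not $|r_i^k|/N$), though this error only helps your inequality; the conceptual point is that decomposing one level too far manufactures work the paper avoids entirely.
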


\begin{proof}
    It suffices to show that the common prefix $p$ of a pair of distinct relators $r, r'\in \ov {\mc R}$ has length less than $\abs{r}/7$ and $\abs{r'}/7$. Since all the $r_i^k$ are words over $\mc S$, we may assume that $r$ and $r'$ are cyclic shifts of relators $r_i^k$ and $r_{i'}^{k'}$ for some $k'\geq k$.
    
    \textbf{Case 1: } $k = k'$. Assume by contradiction that $\abs{p} \geq \abs{r}/7$. Recall that $r_i^k = y_{(i, 1)}^ky_{(i, 2)}^{k}\ldots y_{(i, N)}^k$. Since $N\geq 28$, there exists an integer $l$ such that $y_{(i, l)}^k$ is a subword of $\abs{p}$. Applying Lemma \ref{lemma:star} we get a contradiction. Hence $\abs{p}< \abs{r}/7 = \abs{r'}/7$.
    
    \textbf{Case 2: } $k' > k$. Assume by contradiction that $\abs{p}\geq \abs{r}/7$. Since $N\geq 28$, there exists and integer $l$ such that $y_{(i, l)}^{k}y_{(i, l+1)}^{k}y_{(i, l+2)}^k$ is a subword of $p$. Write $r_{i'}^{k'}$ as in Lemma \ref{lemma:structure_of_ri}. By Lemma \ref{lemma:star}, there exists an integer $l'$ such that $i_{l'} = i_{l'+1} = i_{l'+2}$, a contradiction to (2) of Lemma \ref{lemma:structure_of_ri}. Hence $\abs{p} < \abs{r}/7<\abs{r'}/7$.
\end{proof}

\begin{prop}\label{prop:sigma_compact}
    Every Morse geodesic ray in $X = \cay$ is strongly contracting.
\end{prop}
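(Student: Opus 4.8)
The plan is to show that any geodesic ray $\gamma$ in $X$ with sublinear intersection function $\rho$ actually has bounded $\rho$; by Lemma \ref{lem:4.14}(i)--(ii) this is exactly the statement. Suppose for contradiction that $\rho$ is unbounded. Then for every $T$ there is a relator $r$ with $|r|\le T$ sharing a long subword with $\gamma$; since $\mc R_1$ is finite and each of its elements has length $L$, the relator must have level $k$ that grows as $T\to\infty$. So we may pick a sequence of relators $r^{k_j}_{i_j}$ (or cyclic shifts thereof), with $k_j\to\infty$, each sharing with $\gamma$ a subword $w_j$ whose length is a ``non-negligible'' fraction of $|r^{k_j}_{i_j}|$ — here the key is to choose a threshold, say $|w_j|\ge |r^{k_j}_{i_j}|/g(|r^{k_j}_{i_j}|)$ fails for the particular sublinear $g$ forced by $\rho$, i.e. extract from unboundedness of $\rho$ a sequence where $w_j$ is long relative to some fixed small fraction like $1/(7N)$ of the relator length. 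The precise bookkeeping is: unboundedness of $\rho$ together with $\rho$ sublinear lets us find, for each $m$, a relator $r$ of level $k$ large with $|w|\ge m$ but $|w|\le |r|/m$ eventually false in the sense needed — I will instead directly arrange $|w_j|\ge |r^{k_j}_{i_j}|/(7N)$ is impossible, so I must be careful and instead use that $\rho$ unbounded forces arbitrarily long common subwords, hence (since levels must grow) common subwords with high-level relators of length $\to\infty$ but these are always $<|r|/7$, which is fine; the contradiction must come from propagating down levels.

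The core mechanism is a descent argument. If $w$ is a common subword of $\gamma$ and $r^k_i$ with $|w|$ at least, say, $3|r^k_i|/N$, then since $r^k_i=\prod_{j=1}^N y^k_{(i,j)}$ with the $y^k_{(i,j)}$ all of equal length $|r^k_i|/N$, the subword $w$ contains some full block $y^k_{(i,l)}$, in fact three consecutive ones $y^k_{(i,l)}y^k_{(i,l+1)}y^k_{(i,l+2)}$ when $N$ is large enough. Now $y^k_{(i,l+1)}$ — or better, by Lemma \ref{lemma:structure_of_ys}, writing $y^k_{(i,l+1)}=\prod_{t=1}^m y^{k-1}_{(i_t,l_t)}$ — the middle block contains a full lower-level block $y^{k-1}_{(i',l')}$ as an \emph{interior} subword of $y^k_{(i,l+1)}$, hence as a genuine subword of $w$ and therefore of $\gamma$. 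Crucially, $y^{k-1}_{(i',l')}$ is a subword of the relator $r^{k-1}_{i'}$ and $|y^{k-1}_{(i',l')}| = |r^{k-1}_{i'}|/N$. So $\gamma$ shares with a level $k-1$ relator a subword of proportional length $1/N$. I would iterate: at each step I descend one level while retaining the property that $\gamma$ contains a full block $y^{k'}_{(\cdot,\cdot)}$ of the current level, which is a $1/N$-fraction of a level-$k'$ relator. The obstruction to a clean iteration is that a single block $y^{k'}_{(i',l')}$ need not contain three consecutive lower blocks, so one must track slightly more: keep (by choosing the sequence appropriately at the top, via Lemma \ref{lemma:structure_of_ri} applied with $k'=k-\text{const}$) that $\gamma$ always contains three consecutive current-level blocks. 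This works because three consecutive level-$(k'+1)$ blocks, expanded one level down, certainly contain three consecutive level-$k'$ blocks in their interior.

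Descending all the way to level $1$: we conclude that $\gamma$ contains, as a subword, one of the finitely many words $y^1_{(i,l)}$, each of length $L/N$, and moreover it contains them appearing inside longer and longer common subwords with higher-level relators — but the sharper consequence is that $\gamma$ must contain a full level-$1$ \emph{relator} $r^1_i$, or at least three consecutive level-$1$ blocks $y^1_{(i,l)}y^1_{(i,l+1)}y^1_{(i,l+2)}$, which is a subword of the level-$1$ relator $r^1_i$ of length $3L/N$, contradicting Lemma \ref{lemma:geodesic_condition} / the geodesic property once $3L/N > L/2$? No — $N\ge 28$ so $3L/N < L/2$, so that is not yet a contradiction; instead the real contradiction is with \emph{sublinearity} of $\rho$: the descent shows that whenever $\gamma$ meets a level-$k$ relator in a $\ge 3/N$-fraction, it meets \emph{some} level-$1$ relator (of bounded length $L$) in a subword of length $L/N$, and conversely if $\rho$ is unbounded then $\gamma$ meets level-$k$ relators ($k\to\infty$) in arbitrarily long subwords; I must turn "arbitrarily long" into "$\ge 3|r^k_i|/N$ fraction". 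This last conversion is the main obstacle: a priori $\gamma$ could meet huge relators only in subwords that are long in absolute terms but a vanishing fraction of the relator. I expect to resolve this exactly as in the proof of Lemma \ref{lem:4.14}(iii)/(iv) and Lemma \ref{lemma:caf_case}: using that $\gamma$ is a geodesic and Lemma \ref{lem:isometric_embedding}, if $\gamma$ meets embedded component $\Gamma_0$ of length $\ell$ in a tiny fraction, the contribution to contraction is controlled, and summing these up one shows $\rho$ sublinear but unbounded forces the existence of a descent chain yielding an unbounded family of level-$1$ intersections — impossible since there are only finitely many level-$1$ relators and each has length $L$, so the level-$1$ intersection length is at most $L$, bounding the whole tower. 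Thus $\rho$ is bounded, and $\gamma$ is strongly contracting.
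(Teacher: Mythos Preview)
There is a genuine gap, and you identify it yourself: ``This last conversion is the main obstacle: a priori $\gamma$ could meet huge relators only in subwords that are long in absolute terms but a vanishing fraction of the relator.'' Your subsequent hand-wave about ``summing these up'' does not close it, and your descent argument never gets started because it needs, as input, a common subword that is a fixed \emph{fraction} $3/N$ of the relator containing it, which unboundedness of $\rho$ alone does not give.

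The paper's proof sidesteps this obstacle entirely, and the key observation is one you almost make but don't exploit. Fix $k\ge 1$. Since $\rho$ is unbounded, there is some relator $r\in\ov{\mc R}$ and a common subword $w$ of $r$ and $\gamma$ with $|w|\ge |r_1^k|$ --- an \emph{absolute} length requirement, not a fraction of $|r|$. Necessarily $r$ is (up to inversion and cyclic shift) a relator of level $k'\ge k$, and by Lemma~\ref{lemma:structure_of_ri} it decomposes as a product of level-$k$ blocks $y^k_{(i,l)}$, each of length $|r_1^k|/N$. Since $|w|\ge |r_1^k|$, the word $w$ (or $w^{-1}$) contains some full block $y^k_{(i,l)}$. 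But $y^k_{(i,l)}$ is a subword of the level-$k$ relator $r_i^k$ of length $|r_i^k|/N$, so $\rho(|r_i^k|)\ge |r_i^k|/N$. This holds for every $k$, and since $|r_i^k|\to\infty$, sublinearity of $\rho$ is contradicted.

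So there is no iterated descent and no need to produce a large-fraction intersection with the \emph{same} relator that witnesses unboundedness: one uses the high-level relator only as a container, and Lemma~\ref{lemma:structure_of_ri} extracts a level-$k$ block from it in a single step.
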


\begin{proof}
    Assume that $\gamma$ is a Morse geodesic ray which is not strongly contracting. In light of Lemma \ref{lem:4.14}, its intersection function
    \[
\rho(x) = \max_{\abs{r}\leq x}\left\{\abs{w}\Big\vert \text{$w$ is a subword of $r$ and $\gamma$}\right\},
\]
    is sublinear but unbounded. Let $k\geq 1$. Since $\rho$ is unbounded, there exists a relator $r\in \ov{ \mc R}$ and a common subword $w$ of $r$ and $\gamma$ of length at least $\abs{r_1^k}$. In light of Lemma \ref{lemma:structure_of_ri}, there exist $(i, l)$ such that $y_{(i, l)}^k$ is a subword of $w$ or $w^{-1}$, implying that either $r_i^k$ or $(r_i^k)^{-1}$ have a common subword with $\gamma$ of length at least $\abs{r_i^k}/N$. Consequently, $\rho(\abs{r_i^k})\geq \abs{r_i^k}/N$. Since this holds for all $k$, $\rho$ cannot be sublinear, a contradiction.
\end{proof}

\section{Sigma compact but not all Morse rays are strongly contracting}\label{sec:intermidate}

In this section we modify the group constructed in the previous section. The modified group is a $C'(1/6)$--group which has $\sigma$-compact Morse boundary but not all Morse rays in its Cayley graph are strongly contracting. This rounds out the set of examples we give and shows that there are $C'(1/6)$--groups whose Morse boundary is $\sigma$-compact but not all Morse rays are strongly contracting.

\smallskip
We use the notation from the previous section. Let ${\mc S}' = \mc S\cup\{a\}$ for some formal variable $a\not\in \mc S$. For $1\leq i \leq 2$ define 
$\tilde {r}_i^k = r_i^k$ and for $i=3$ define
\begin{align}\label{eq5}
    \tilde{r}_3^k = \begin{cases}
    r_3^ka^k&\text{if  $k\leq \abs{r_3^k/42}$},\\
    r_3^k &\text{otherwise.}
    \end{cases}
\end{align}
Define ${\mc R}' = \cup_{k\geq 1}\cup_{i=1}^3\{\tilde{r}_i^k\}$.

\begin{lem}
     The set $\mc R'$ satisfies the  $C'(1/6)$--small-cancellation condition.
\end{lem}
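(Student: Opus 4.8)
The plan is to reduce the statement for $\mc R'$ to the $C'(1/7)$--small-cancellation condition already established for $\mc R$, treating the new generator $a$ and the suffixes $a^k$ as a mild perturbation. First I would observe that the only relators that differ between $\mc R$ and $\mc R'$ are the $\tilde r_3^k$ with $k\le \abs{r_3^k}/42$, where we have appended $a^k$, and that $a\notin\mc S$ while every $r_i^k$ is a word over $\mc S$. Consequently, in the symmetrised closure $\ov{\mc R'}$, any cyclic conjugate of a relator either lies entirely in $\ov{\mc R}$ (for $i\le 2$, or for $i=3$ when no suffix was added), or is a cyclic conjugate of $r_3^k a^k$ (resp. $a^{-k}(r_3^k)^{-1}$). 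In the latter case, a cyclic conjugate has the form $a^{k_1}\cdot(\text{cyclic conjugate of }r_3^k)\cdot a^{k_2}$ with $k_1+k_2=k$ (allowing $k_1$ or $k_2$ to be zero), since all occurrences of $a^{\pm1}$ are concentrated in one syllable block.

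The key step is then to bound the length of a common prefix $p$ of two distinct elements $r,r'\in\ov{\mc R'}$. I would split into cases according to how many of $r,r'$ carry an $a$-block. If neither does, then $p$ is a common prefix of two distinct elements of $\ov{\mc R}$, so $\abs{p}<\abs r/7\le\abs r/6$ by the previous lemma, and we are done (note lengths only increased, so $\abs{r}$ as an element of $\ov{\mc R'}$ dominates its length as an element of $\ov{\mc R}$ — actually they are equal here). If exactly one, say $r'$, is a cyclic conjugate of $r_3^k a^k$ and begins with its $a$-block, then $p$ starts with some power $a^j$; but $r$, having no $a$, shares no letter with this, so $p$ is empty. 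If $r'$ begins at a position inside the $r_3^k$-part, the common prefix with an $a$-free word $r$ can run at most until the $a$-block of $r'$ begins, hence is a common prefix of $r$ (an $\mc R$-word or $\mc R'$-word) with an honest cyclic conjugate of $r_3^k$; applying the $C'(1/7)$ bound for $\mc R$ gives $\abs p<\abs{r_3^k}/7\le \abs{\tilde r_3^k}/7\le\abs{\tilde r_3^k}/6$, and similarly bounded by $\abs r/6$. The remaining case is both $r$ and $r'$ carrying $a$-blocks: if both are conjugates of $\tilde r_3^k,\tilde r_3^{k'}$, a common prefix either lies inside an $a$-block (so $\abs p\le\max(k,k')\le \max(\abs{r_3^k},\abs{r_3^{k'}})/42<\abs r/6$, using the guard $k\le\abs{r_3^k}/42$ from \eqref{eq5}) or crosses into the $\mc S$-part, in which case it decomposes as $a^{j}$ times a common prefix of two cyclic conjugates of $r_3^k$ and $r_3^{k'}$, and the latter piece is bounded by $\abs{r_3^k}/7$ via Lemma on $\mc R$ while $j\le k\le\abs{r_3^k}/42$; summing, $\abs p<\abs{r_3^k}/7+\abs{r_3^k}/42<\abs{r_3^k}/6\le\abs{\tilde r_3^k}/6$, and symmetrically for $r'$. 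Combining all cases yields $\abs p<\abs r/6$ and $\abs p<\abs{r'}/6$, which is the $C'(1/6)$--condition.

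The main obstacle I anticipate is the bookkeeping in the mixed case where a common prefix straddles the boundary between an $a$-block and the $\mc S$-part of a conjugate of $r_3^k a^k$: one must argue carefully that such a prefix really does split as $a^{j}$ followed by a prefix common to two genuine cyclic conjugates of level-$k$ (resp. level-$k'$) $\mc R$-relators, so that Lemma (the $C'(1/7)$ result for $\mc R$) applies; this uses that $a$ appears in $r_3^k$ nowhere, so the two words can only agree on an $\mc S$-letter after the shorter $a$-block is exhausted. A secondary point to handle cleanly is that appending $a^k$ strictly lengthens $\tilde r_3^k$, so every bound of the form ``$\abs p<\abs{r_3^k}/6$'' immediately upgrades to ``$\abs p<\abs{\tilde r_3^k}/6$''; and the guard $k\le\abs{r_3^k}/42$ in \eqref{eq5} is exactly what keeps the $a$-blocks short enough that even a prefix lying entirely in an $a$-block respects the $1/6$ threshold (with room to spare, since $1/42+1/7<1/6$).
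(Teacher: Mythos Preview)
Your approach is essentially the paper's: decompose a piece into an $a$-block (of length at most $k\le\abs{r_3^k}/42$) and an $\mc S$-word that is a piece in $\mc R$ (hence of length $<\abs{r_i^k}/7$), then use $1/42+1/7=1/6$. The paper compresses your entire case analysis into one line: any piece $p$ of $r\in\ov{\mc R'}$ (with $r$ a cyclic shift of $\tilde r_i^k$) has the form $p=a^jq$ or $p=qa^j$ where $j\le k$ and $q$ is a piece of $r_i^k$ in $\mc R$, whence $\abs p<\abs{r}/42+\abs{r_i^k}/7<\abs r/6$.

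Two small corrections to your execution. First, a cyclic conjugate of $r_3^ka^k$ need not have the form $a^{k_1}\cdot w\cdot a^{k_2}$; it can also be $w_1a^kw_2$ with $w_2w_1=r_3^k$ and both $w_1,w_2$ nonempty, and your case split omits this. The analysis is the same (a prefix is either contained in $w_1$, or equals $w_1a^j$, and $w_1$ is a prefix of the cyclic shift $w_1w_2$ of $r_3^k$), and in fact once you observe that a piece cannot have the form $u a^k v$ with both $u,v$ nonempty $\mc S$-words---because the position of the unique $a$-block pins down the cyclic shift, forcing $r=r'$---the paper's uniform description $p=a^jq$ or $p=qa^j$ follows immediately. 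Second, in the ``both carry $a$-blocks'' case you want $\abs p\le\min(k,k')$, not $\max(k,k')$: with $\max$ you do not get $\abs p<\abs r/6$ for the \emph{shorter} of the two relators.
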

\begin{proof}
    Let $p$ be a piece of a relator $r\in \ov{\mc R}'$ which is a cyclic shift of a relator $\tilde{r}_i^k$. Then either $p = q a^j$ or $p = a^j q$ for some $0\leq j \leq k$ and subword $q$ which, in $\mc R$, is a piece of $r_i^k$. By \eqref{eq5}, $j\leq \abs{r}/42 $ and since $\mc R$ satisfies the $C'(1/7)$--small-cancellation condition $\abs{q}< \abs{r_i^k}/7$. Hence $\abs{p}< \abs{r}/6$. 
\end{proof}

\begin{prop}
     The group $G = \langle \mc S'|\mc R'\rangle$ has sigma compact Morse boundary but not every Morse ray in $X = \mathrm{Cay}(G, \mc S')$ is strongly contracting.
\end{prop}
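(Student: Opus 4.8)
The strategy is to verify the two halves of the statement separately, using the machinery already set up. For $\sigma$-compactness of $\partial_* G$, by Lemma \ref{lem:contraction_exhaustion} it suffices to produce a contraction exhaustion, or equivalently (via Lemma \ref{lem:4.14}) to show that every Morse geodesic ray has \emph{bounded} intersection function. The new generator $a$ only appears in the modified relators $\tilde r_3^k$, and only in a block $a^k$ whose length is tiny compared to $|\tilde r_3^k|$ (at most $|r_3^k|/42$). So the plan is to rerun the argument of Proposition \ref{prop:sigma_compact}: if $\gamma$ is Morse with intersection function $\rho$ that is sublinear but unbounded, then for each $k$ there is a relator $r\in\ov{\mc R'}$ sharing a subword with $\gamma$ of length at least $|r_1^k|$; after deleting the (short) $a$-powers one still has a long common subword with some $r_i^k\in\mc R$, so by Lemma \ref{lemma:structure_of_ri} (applied as in the previous section) $\gamma$ shares a subword of length $\gtrsim |r_i^k|/N$ with some $r_*^k$, forcing $\rho(|r_*^k|)\gtrsim |r_*^k|/N$ for all $k$ and contradicting sublinearity. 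The bookkeeping to check is just that stripping an initial/terminal $a$-block of length $\le |r|/42$ from a common subword of length $\ge |r_1^k| = |r_3^k|$ still leaves something of length $\ge |r_i^k|/N$ once $N$ and the level are large enough; since $|r_1^k|=|r_3^k|$ and $42 > N$ fails in general, I would instead note that a subword of $\gamma$ of length $\ge|r_1^k|$ meeting $\tilde r_3^k$ decomposes as (piece of $r_3^k$)$\cdot a^j$ or $a^j\cdot$(piece of $r_3^k$) or contains a genuine long chunk of $r_3^k$; in all cases one extracts a common subword with $r_3^k$ of length $\ge |r_3^k| - |r_3^k|/42$, which is $\ge |r_3^k|/N$ for $N\ge 28$, and then Lemma \ref{lemma:structure_of_ri} applies verbatim.

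\textbf{The non-strongly-contracting Morse ray.} For the second assertion I would exhibit an explicit Morse ray in $X = \mathrm{Cay}(G, \mc S')$ whose intersection function is sublinear but unbounded is \emph{not} what we want — rather, by Lemma \ref{lem:4.14}(ii) a ray fails to be strongly contracting exactly when its intersection function is unbounded, and by (i) it is Morse when that function is sublinear. So I want a geodesic ray $\gamma$ whose intersection function $\rho$ is sublinear but unbounded. Wait: the previous paragraph shows \emph{no} Morse ray has unbounded intersection function after the $a$-modification? That cannot be, since the statement asserts such a ray exists. The resolution is that the argument of the previous paragraph only controls common subwords \emph{with the $\mc R$-part}; the new relators $\tilde r_3^k$ let a ray accumulate long common subwords that live in the $a$-blocks together with bounded-length pieces, which do \emph{not} propagate down through Lemma \ref{lemma:structure_of_ri}. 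Concretely, I would build $\gamma$ by concatenating, for a suitable increasing sequence of levels $k_1 < k_2 < \cdots$, the words $a^{k_j}$ (or short neighbourhoods thereof inside $\tilde r_3^{k_j}$) glued together by connecting subwords chosen via Lemma \ref{lemma:geodesic_condition} so that the whole path is a geodesic and every common subword with a relator has length $\le |r|/3$; the intersection function then picks up $\rho(|\tilde r_3^{k_j}|) \ge k_j \to \infty$, so $\gamma$ is not strongly contracting, yet $\rho$ stays sublinear because $k_j = O(|\tilde r_3^{k_j}|/42)$ and the connecting pieces contribute only bounded overlap.

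\textbf{Main obstacle.} The delicate point is the second part: one must produce an actual \emph{geodesic} ray, not merely a path, that realizes the $a$-powers as genuine subwords while keeping all relator-overlaps below $|r|/3$, so that Lemma \ref{lemma:geodesic_condition} certifies geodesicity and Lemma \ref{lem:4.14}(i) certifies Morseness. This requires the same kind of surgery as in the proof of Lemma \ref{lem:strong_to_non-sigma}: choosing the interfaces between consecutive blocks $a^{k_j}$ and between an $a$-block and the surrounding structure so that no relator contains a forbidden junction, using that the relevant connecting words are not pieces. I would carry this out by taking the connecting word between the $\tilde r_3^{k_j}$-region and the $\tilde r_3^{k_{j+1}}$-region to be a fixed short non-periodic word over $\mc S$ (e.g.\ a suitable subword of $r_1^1$) padded so that junctions $a\,\cdot\,(\text{connector})$ and $(\text{connector})\,\cdot\,a$ never embed in a relator — this is possible since $a$ occurs in relators only inside maximal blocks $a^k$ flanked by the last/first letters of $r_3^k \in \mc S^*$, so the only relator-junctions available next to an $a$ are of the form $(r_3^k)^+ a$ and $a (r_3^k)^-$ (and inverses), and choosing the connector's first and last letters to differ from those finitely many letters (enlarging $\mc S$ if needed, or just observing $|\mc S|\ge 4N \ge 112$ gives enough room) rules out every bad junction. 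Once the junctions are clean, every common subword of $\gamma$ with a relator $r$ lies within a single block-plus-connector region, and the $\le |r|/3$ bound follows from the $C'(1/6)$ condition on $\mc R'$ together with $k \le |r|/42$, completing both halves of the proposition.
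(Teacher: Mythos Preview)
Your proposal has one real gap and one large overcomplication.

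\textbf{The overcomplication.} For the Morse ray that is not strongly contracting, you reach for surgery and a careful concatenation of $a^{k_j}$-blocks with connectors. None of this is needed: the paper simply takes $\gamma$ to be the ray labelled $a^\infty$ starting at the identity. Since $a$ occurs in relators only inside the block $a^k$ of $\tilde r_3^k$, every common subword of $\gamma$ with a relator $r\in\ov{\tilde r_i^k}$ has length at most $k\le |r|/42\le |r|/3$, so Lemma \ref{lemma:geodesic_condition} makes $\gamma$ a geodesic. Because $|\tilde r_3^k|=(2N)^{k-1}|r_i^1|$, one has $k\le \log_N(|r|)+2$, so the intersection function of $\gamma$ is dominated by $\rho_0(x)=\log_N(x)+2$, which is sublinear; hence $\gamma$ is Morse. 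But $\rho(|\tilde r_3^k|)\ge k$ for every $k$, so $\rho$ is unbounded and $\gamma$ is not strongly contracting. Your elaborate junction analysis is unnecessary.

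\textbf{The gap.} Your $\sigma$-compactness argument begins by asserting that it suffices to show every Morse ray has \emph{bounded} intersection function. That is false---it would make every Morse ray strongly contracting and directly contradict the second half of the proposition, as you yourself notice mid-proof. A contraction exhaustion only requires a countable family of \emph{sublinear} functions dominating all Morse intersection functions; boundedness is much stronger. Your self-correction (``the argument only controls common subwords with the $\mc R$-part'') is the right instinct, but you never turn it into an actual contraction exhaustion. The paper's argument is: for a Morse ray $\gamma$, the proof of Proposition \ref{prop:sigma_compact} shows that common subwords of $\gamma$ with relators in the \emph{original} $\mc R$ are bounded by some constant $D$ depending on $\gamma$ (since otherwise one extracts $y_{(i,l)}^k$-subwords for all $k$ and contradicts sublinearity). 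Then any common subword with a relator in $\ov{\tilde r_i^k}$ decomposes as at most two $\mc R$-pieces plus an $a$-block, so has length at most $2D+k\le 2D+\log_N(|r|)+2$. Thus $\rho_j(x)=\log_N(x)+2+2j$ for $j\ge 1$ is a contraction exhaustion. You have the ingredients for this but not the conclusion.
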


\begin{proof}
    We first show that there exists a geodesic Morse ray in $X$ which is not strongly contracting. Define the sublinear function $\rho_0(x) = \log_{N}(x)+2$. Let $\gamma$ be the ray in $X$ starting at the identity, where every edge on $\gamma$ is labelled by $a$. Let $w$ be a common subword of a relator $r\in \ov{\tilde{r}_i^k}$. By \eqref{eq5} we have that $\abs{w}\leq \abs{r}/42$. Hence $\gamma$ is a geodesic by Lemma \ref{lemma:geodesic_condition}. Further, if $i\neq 3$, then $w$ is the empty word. Otherwise $\abs{w}\leq k$. By construction, $\abs{r_i^k} = (2N)^{k-1}\abs{r_i^1}$. Thus, $\abs{w}\leq \rho_0(\abs{r})$. This shows that the intersection function $\rho$ of $\gamma$ satisfies $\rho\leq \rho_0$, implying that $\gamma$ is Morse by Lemma \ref{lem:4.14}. Further, the intersection function $\rho$ is unbounded and hence $\gamma$ is not strongly contracting by Lemma \ref{lem:4.14}.
    
    Next we show that $\mb X$ is $\sigma$-compact. Morse precisely, for every integer $j\geq 1$ define $\rho_j(x) = \rho_0(x)+2j$. We show that $\rho_j(x)$ is a contraction exhaustion of $\mb X$. Let $\gamma$ be a Morse geodesic ray and let $\rho$ be its intersection function. The proof of Proposition \ref{prop:sigma_compact} shows that there exists some bound $D$ such that every subword of $\gamma$ which is also a subword of a relator $r\in \mc R$ has length at most $D$. Thus for any common subword $w$ of $\gamma$ and a relator $r\in \ov{\tilde{r}_i^k}$ we have that $\abs{w}\leq 2D + k\leq \rho_D(\abs{r})$. Hence $\rho\leq \rho_D$, which concludes the proof. 
\end{proof}

\bibliography{mybib}

\begin{thebibliography}{ACGH19}

\bibitem[ACGH17]{arzhantseva2017characterizations}
Goulnara~N Arzhantseva, Christopher~H Cashen, Dominik Gruber, and David Hume.
\newblock Characterizations of morse quasi-geodesics via superlinear divergence
  and sublinear contraction.
\newblock {\em Documenta Mathematica}, 22:1193--1224, 2017.

\bibitem[ACGH19]{arzhantseva2019negative}
Goulnara~N Arzhantseva, Christopher~H Cashen, Dominik Gruber, and David Hume.
\newblock Negative curvature in graphical small cancellation groups.
\newblock {\em Groups, Geometry, and Dynamics}, 13(2):579--632, 2019.

\bibitem[CCS19]{charney2019complete}
Ruth Charney, Matthew Cordes, and Alessandro Sisto.
\newblock Complete topological descriptions of certain morse boundaries.
\newblock {\em to appear in Groups Geom. Dyn.}, 2019.

\bibitem[CD19]{CD:stable}
Matthew Cordes and Matthew~Gentry Durham.
\newblock Boundary convex cocompactness and stability of subgroups of finitely
  generated groups.
\newblock {\em Int. Math. Res. Not. IMRN}, (6):1699--1724, 2019.

\bibitem[Cor17a]{C:Morse}
Matthew Cordes.
\newblock Morse boundaries of proper geodesic metric spaces.
\newblock {\em Groups Geom. Dyn.}, 11(4):1281--1306, 2017.

\bibitem[Cor17b]{C:survey}
Matthew Cordes.
\newblock A survey on morse boundaries \& stability.
\newblock {\em arXiv preprint arXiv:1704.07598}, 2017.

\bibitem[CS15]{CS:contracting}
Ruth Charney and Harold Sultan.
\newblock Contracting boundaries of {$\rm CAT(0)$} spaces.
\newblock {\em J. Topol.}, 8(1):93--117, 2015.

\bibitem[Gru15]{gruber2015groups}
Dominik Gruber.
\newblock Groups with graphical c(6) and c(7) small cancellation presentations.
\newblock {\em Transactions of the American Mathematical Society},
  367(3):2051--2078, 2015.

\bibitem[GS18]{gruber2018infinitely}
Dominik Gruber and Alessandro Sisto.
\newblock Infinitely presented graphical small cancellation groups are
  acylindrically hyperbolic.
\newblock In {\em Annales de l'Institut Fourier}, volume~68, pages 2501--2552,
  2018.

\bibitem[HHP20]{HHP:injective}
Thomas Haettel, Nima Hoda, and Harry Petyt.
\newblock Coarse injectivity, hierarchical hyperbolicity, and
  semihyperbolicity.
\newblock {\em arXiv preprint arXiv:2009.14053}, 2020.

\bibitem[LSLS77]{lyndon1977combinatorial}
Roger~C Lyndon, Paul~E Schupp, RC~Lyndon, and PE~Schupp.
\newblock {\em Combinatorial group theory}, volume 188.
\newblock Springer, 1977.

\bibitem[Mur19]{M:CAT0}
Devin Murray.
\newblock Topology and dynamics of the contracting boundary of cocompact cat
  (0) spaces.
\newblock {\em Pacific Journal of Mathematics}, 299(1):89--116, 2019.

\bibitem[Str90]{strebel1990small}
Ralph Strebel.
\newblock Small cancellation groups.
\newblock In {\em Sur les groupes hyperboliques d’apr{\`e}s Mikhael Gromov},
  pages 227--273. Birkh{\"a}user Basel, 1990.

\bibitem[Sul14]{S:CAT0}
Harold Sultan.
\newblock Hyperbolic quasi-geodesics in cat (0) spaces.
\newblock {\em Geometriae Dedicata}, 169(1):209--224, 2014.

\bibitem[SZ22]{SZ:injective}
Alessandro Sisto and Abdul Zalloum.
\newblock Morse subsets of injective spaces are strongly contracting.
\newblock {\em arXiv preprint arXiv:2208.13859}, 2022.

\bibitem[Zbi22]{Z:manifold}
Stefanie Zbinden.
\newblock Morse boundaries of 3-manifold groups.
\newblock {\em arXiv preprint arXiv:2212.08368}, 2022.

\end{thebibliography}
\bibliographystyle{alpha}

\end{document}